\newtheorem{remark}{Remark}[section]
\title{Data-driven probability density forecast for stochastic dynamical systems
	\thanks{L.Jiang acknowledges the support of NSFC 12271408 and the
		Fundamental Research Funds for the Central Universities. }
}
\author{	Meng Zhao\thanks {School of Mathematical Sciences,  Tongji University, Shanghai 200092, China. ({\tt  1910736@tongji.edu.cn}).}
	\and	
	Lijian Jiang\thanks{School of Mathematical Sciences,  Tongji University, Shanghai 200092, China. ({\tt  ljjiang@tongji.edu.cn}).}
}
\begin{document}

	\maketitle
	
	\begin{abstract}In this paper, a data-driven nonparametric approach is presented for forecasting the probability density evolution of stochastic dynamical systems. The method is based on stochastic Koopman operator and extended dynamic mode decomposition (EDMD). To approximate the finite-dimensional eigendecomposition of the stochastic Koopman operator,  EDMD is applied to the training data set sampled from the stationary distribution of the underlying stochastic dynamical system.  The family of the Koopman operators  form a semigroup, which is  generated by the infinitesimal generator of the  stochastic dynamical system. A significant  connection between the generator and Fokker-Planck operator provides a way to construct an orthonormal basis of a weighted Hilbert space. A spectral decomposition of the probability density function is accomplished in this weighted space.  This approach is a data-driven method and  used  to predict  the probability density evolution and real-time  moment estimation.  In the limit of the large  number of  snapshots and  observables, the data-driven probability density approximation    converges to the Galerkin projection of the semigroup solution of Fokker-Planck equation on a basis adapted to an invariant measure.  The proposed method shares the similar idea  to diffusion forecast, but renders more accurate probability density  than the diffusion forecast does.  A few numerical examples are presented to  illustrate the  performance of the data-driven probability density forecast.
	\end{abstract}
	
	\begin{keywords}
		Stochastic dynamical system,  Stochastic Koopman operator, Probability density  decomposition, Diffusion forecast
	\end{keywords}
	
	
	\pagestyle{myheadings}
	
	\thispagestyle{plain}
	\markboth{M. Zhao,  L. Jiang}{Data-driven Probability Density Forecast}

	\section{Introduction}
	Dynamical systems afford a mathematical framework to characterize the evolution of a physical process and model  the abundant interactions between coupled physical quantities. Further, stochastic dynamical systems play a critical  role  in the analysis, prediction, and understanding of the behaviour of systems, or the flow maps that describe the evolution of stochastic proces. In this big data era, data are plentiful, while the governing equations or physical laws may be  elusive in the practical scenarios such as the  problems in climate science, finance and neuroscience. Inevitably, modern stochastic dynamical system is currently undergoing a renaissance, with analytical derivations  complemented by data-driven modeling \cite{data-driven}.  For the data-driven modeling in science and engineering, one of the fundamental problems is  to discover  high-dimensional dynamical systems from time-series data.

	Transfer operator  approach has  drawn significant   attention and been widely used for  data-driven modeling  in the past decades. In general, the transfer operator methods focus on the evolution of elements in observable spaces or measure spaces. These methods  may approximate  Koopman operator \cite{Koopman_RDS,Koopman1,Koopman2} and  Perron-Frobenius operator \cite{PF_Koopman,PF1}  by computing a finite-dimensional eigendecomposition of the operators. Remarkably, the action of Koopman operator  on the observables  lifts the action from the state space to the observable space  through composition of the observable and the underlying dynamical flow map. The Koopman operator transforms  the nonlinear, finite-dimensional dynamical system to a linear system in infinite dimensions.  Therefore, the modeling  of Koopman operator  follows the linear operator theory and the finite-dimensional approximation will be utilized in practical computation.

	Data-driven mechanism of approximating the transfer operators has  been widely investigated for a variety of applications, such as the identification of dynamical modes \cite{Koopman_RDS,dmdkf,PF_Koopman,DLdmd}, spectral analysis \cite{sa3,sa1,sa2}, nonparametric forecasting \cite{df1,df,sa4} and the modeling of slow-fast systems  \cite{sf1,sf2}.  The focus is on the estimation of the finite-dimensional projection of Koopman operator onto a specific subspace. This can be fulfilled by Dynamic Mode Decomposition(DMD) and its variants \cite{sDMD3,EDMD2,sDMD2,sDMD1,DMD1,kerDMD2,kerDMD1}. The DMD approach originates from  computational fluid dynamics \cite{DMD0}, and has been proven \cite{DMD01} that it gives the estimation of eigenvalues and  modes of the Koopman operator under certain conditions. In what follows, we will use one of many varieties - Extended Dynamic Mode Decomposition(EDMD) algorithm, which is robust and works   for stochastic dynamical systems \cite{EDMD1}, to approximate a finite-dimensional eigendecomposition of the Koopman operator. In EDMD, Koopman operator is projected to a finite-dimensional dictionary subspace of observables. Thus the eigenvalue problem becomes a matrix eigenvalue problem. In particular, the standard DMD (also known as exact DMD) is a specific case of EDMD for the dictionary space constituted by the state itself, which may not accurately characterize the action of Koopman operator. EDMD is able to give an accurate approximation to the Koopman operator  when the dictionaries are elaborately selected.
	The choice of the dictionaries is very technical and critical.  In the setting of ergodic dynamical systems, observable spaces are the nature $L^2$ spaces of the  scalar functions, and the measures are associated  with $L^2$ densities. Perron-Frobenius operator is the $L^2$-adjoint to Koopman operator. A common approach to approximate the Perron-Frobenius operator is celebrated as Ulam's method, and \cite{PF_Koopman} has shown that Ulam's method and EDMD are dual each other.

	For stochastic dynamical systems, the action of the Koopman operator  takes the expectation of the composition of observable and flow map, and  this Koopman operator is  called stochastic Koopman operator \cite{Koopman_RDS}. The  stochastic Koopman operator implies  that the action of stochastic Koopman operator on observables represents  the  solution of the Backward Kolmogorov Equation (BKE) of the stochastic dynamical system  with observables as initial condition. Furthermore, BKE is essentially governed by the infinitesimal generator of the stochastic dynamical system. Therefore, there is an elegant  connection bewteen stochastic Koopman operator  and the infinitesimal generator, i.e., the stochastic Koopman operator is an analytic and bounded semigroup generated by the infinitesimal generator of the stochastic dynamical system \cite{semigroup2}. Under appropriate conditions, the infinitesimal generator is $L^2$-adjoint to Fokker-Planck operator. Besides, the Fokker-Planck operator characterizes the transitions of probabilities, which reveals that the Fokker-Planck operator is the generator of the corresponding  Perron-Frobenius operator semigroup. It is worth noting  that, for an ergodic and time-reversible stochastic dynamical system with a unique invariant measure, the infinitesimal generator is self-adjoint and non-positive in the weighted  $L^2$  space associated with the invariant measure. The self-adjointness and non-positivity guarantee that the eigenvalues of the generator  are non-positive and real, and its spectrum are discrete  in this specific Hilbert space. Thus the  eigenfunctions of the generator constitute an orthonormal basis of weighted Hilbert space. Moreover, eigenvalues can be sorted by magnitude to extract the slow behaviours, which correspond  to larger eigenvalues.  The fast behaviours describe   noise. The above mentioned relationship motivates us to estimate the eigendecomposition of the Fokker-Planck operator through the data-driven method, and approximate the probability density functions in the space spanned by the eigenfunctions.
	
	In this paper, given a set of time-series data, our target is to approximate the probability density function of the stochastic dynamical systems. We adapt EDMD method to a weighted $L^2$ space to obtain the approximation of the Koopman operator. Hence  the eigendecomposition of the infinitesimal generator can be computed through the relationship bewteen the generator and semigroup. Then the adjoint property between the infinitesimal generator and Fokker-Planck operator provides a way to procure the eigenpairs of Fokker-Planck operator.  Finally, we decompose the probability density function in the space spanned by the eigenfunctions of the  Fokker-Planck operator, and use the Fokker-Planck equation to analytically compute the coefficient functions evolved in time. We call the proposed  method as dynamic probability density decomposition (DPDD). The approach allows that the time-series data  can be not only  from one single long-time trajectory, but also from ensemble trajectories. But, the utilization of EDMD requires that the data must be data pairs. As a similar method,  the diffusion forecast method proposed in \cite{df1,df} is also to estimate the probability density function using the eigenfunctions of the generator of a specific gradient flow system. It utilizes the diffusion maps  \cite{dm2,dm1} to approximate  the eigenfunctions. Comparing to diffusion forecast, our approach analytically compute the time-dependent coefficients of the decomposition without approximation error, which occurs in diffusion forecast. Our analysis and numerical results  show that DPDD achieves better accuracy and efficiency than the diffusion forecast.

	The paper  is organized as follows. In Section \ref{pre}, we briefly introduce the infinitesimal generator, Fokker-Planck operator,  Backward Kolmogorov equation and stochastic Koopman operator for stochastic dynamical systems. Dynamic probability density decomposition is elaborated in Section \ref{DPDD}. Besides, we also review the diffusion forecast method and make a comparison between diffusion forecast and the  Koopman operator-based DPDD. The convergence of DPDD is analyzed  in Section \ref{CA}.  A few numerical  results are presented to illustrate the probability density forecast using DPDD  in Section \ref{num}. Some conclusions and comments are made  in Section \ref{con}.
	
	\section{Preliminaries}\label{pre}
	In this section, we give a short review on stochastic dynamical systems and their  infinitesimal generator, and present the connection with  Koopman operator and its generator. These will be used in the whole paper.
	\subsection{The infinitesimal generator of SDE and its adjoint operator}\label{generator_FKO}
	Consider a time-homogeneous Markov process $X_t \in \mathcal{M}$ $\subseteq \mathbb{R}^d$,  which is generated by the SDE with drift term $b(x):\mathbb{R}^d \rightarrow \mathbb{R}^d$, diffusion function $\sigma(x):\mathbb{R}^d \rightarrow \mathbb{R}^{d\times s}$, and $s$-dimensional Wiener process $W_t$, i.e., the $X_t$ solves  the autonomous dynamical system,
	\begin{equation}\label{model}
		dX_t=b(X_t)dt+\sigma(X_t)dW_t,\quad X(0)=X_0.
	\end{equation}
	The (infinitesimal) generator $\mathcal {L}$ of SDE (\ref{model}) is then defined by
	\begin{equation}\label{gen}
		\mathcal{L}v=b\cdot \nabla v+\frac{1}{2}\Sigma:\nabla \nabla v,
	\end{equation}
	where $\Sigma(x)=\sigma(x)\sigma^T(x)$ is the diffusion  matrix. The formal $L^2$-adjoint operator $\mathcal{L}^*$ (a.k.a., the Fokker-Planck operator) is defined by
	\begin{equation}
		\mathcal{L}^*v=-\nabla \cdot \big(bv\big)+\frac{1}{2}\nabla \cdot\nabla\cdot \big(\Sigma v\big).
	\end{equation}
	\begin{theorem}\cite{ok,ms_t}(\textbf{Backward Kolmogorov Equation})
		Let $f :\mathcal{M} \rightarrow \mathbb{R}$ and define
		\[
		v(x,t)=\mathbb{E}\big(f(X_t)|X(0)=x\big),
		\]
		where the expectation is with respect to the Wiener process. If  $f$ is  smooth,  then the following equation
		\begin{equation}\label{bK}
			\left\{\begin{aligned}
				\frac{\partial v}{\partial t}&=\mathcal{L}v, \quad (x,t)\in \mathcal{M}\times (0,\infty),\\
				v(x,0) &= f(x),
			\end{aligned}\right.
		\end{equation}
		has a unique bounded classical solution $v(x,t)\in \mathcal{C}^{2,1}(\mathcal{M}\times \mathbb{R}^+)$.
	\end{theorem}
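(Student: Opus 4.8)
The plan is to decouple the analytic and the probabilistic content: first I would produce a bounded classical solution of the Cauchy problem (\ref{bK}) by classical parabolic PDE theory, and then use It\^o's formula to show that this solution must coincide with the conditional expectation $v$, which simultaneously yields uniqueness.

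For the existence step I would invoke classical parabolic theory. Under the standing regularity and (uniform) ellipticity hypotheses on $b$ and $\Sigma=\sigma\sigma^T$, together with the smoothness of $f$, the operator $\partial_t-\mathcal L$ possesses a fundamental solution $p(t,x,y)$ (built by the parametrix method), and $u(x,t):=\int_{\mathcal M}p(t,x,y)f(y)\,dy$ solves (\ref{bK}). Interior Schauder estimates give $u\in\mathcal C^{2,1}(\mathcal M\times\mathbb R^+)$ with locally bounded spatial derivatives, and the parabolic maximum principle gives $\|u(\cdot,t)\|_\infty\le\|f\|_\infty$, so $u$ is bounded. (If $\mathcal M$ is a manifold or a domain, the same conclusion holds with the relevant boundary behaviour absorbed into $p$; in the degenerate-diffusion case one would instead appeal to H\"ormander-type hypoellipticity.)

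Next I would identify $u$ with $v$. Fix $t>0$ and apply It\^o's formula to $s\mapsto u(X_s,t-s)$ on $[0,t]$. Since $\partial_\tau u=\mathcal L u$, the finite-variation part cancels and
\[
u(X_{s},t-s)=u(X_0,t)+\int_0^{s}\nabla u(X_r,t-r)\cdot\sigma(X_r)\,dW_r .
\]
Localizing with the exit times $\tau_n$ from balls $B_n\subset\mathcal M$, on which $\nabla u$ and $\sigma$ are bounded, the stopped stochastic integral is a genuine martingale, hence $\mathbb E\bigl(u(X_{t\wedge\tau_n},t-(t\wedge\tau_n))\,\big|\,X_0=x\bigr)=u(x,t)$. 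Letting $n\to\infty$ and using the boundedness of $u$ together with non-explosion of $X_t$ (guaranteed by the growth conditions on $b,\sigma$), dominated convergence gives $\mathbb E\bigl(f(X_t)\,\big|\,X_0=x\bigr)=u(x,t)$, i.e. $v=u$, which is the asserted classical solution. Running the identical computation for an arbitrary bounded classical solution $\tilde u$ of (\ref{bK}) forces $\tilde u(x,t)=\mathbb E\bigl(f(X_t)\mid X_0=x\bigr)=v(x,t)$, giving uniqueness.

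I expect the main obstacle to be the passage from local martingale to martingale in the last step: one must either verify $\mathbb E\int_0^{t}|\nabla u(X_r,t-r)\,\sigma(X_r)|^2\,dr<\infty$ or, more robustly, carry the localization and dominated-convergence argument through cleanly, and this is precisely where the boundedness of $u$, the growth control on the coefficients, and moment bounds on $X_t$ are all needed in tandem. A secondary, purely analytic burden sits inside the existence step — constructing the fundamental solution and establishing the Schauder (or Gaussian) bounds — which is where smoothness of the coefficients and ellipticity of $\Sigma$ are genuinely used.
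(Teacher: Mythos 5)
The paper does not prove this theorem; it is quoted with a citation to \cite{ok,ms_t}, and the proofs in those references follow exactly the route you describe (classical parabolic existence plus an It\^o-formula/Dynkin identification of the solution with the conditional expectation, which also delivers uniqueness). Your proposal is therefore correct and essentially the same argument as the paper's cited sources, with the usual caveat that the regularity, ellipticity, and growth hypotheses you invoke are exactly the standing assumptions those texts impose.
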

	
	We assume that the initial state $X_0$ is a random variable with probability density function $p_0(x)$, and the transition probability density function $p(x,t)$ exists and is a function in $\mathcal{C}^{2,1}(\mathcal{M}\times \mathbb{R}^+)$. Then $p(x,t)$ satisfies the \textbf{Fokker-Planck equation} \cite{GA1,ms_t,applied_sde}(a.k.a., the\textbf{ forward Kolmogorov equation}),
	\begin{equation}\label{FP}
		\left\{
		\begin{aligned}
			\frac{\partial p(x,t)}{\partial t} &=\mathcal{L}^* p(x,t), \quad (x,t)\in \mathcal{M}\times (0,\infty),\\
			p(x,0)&=p_0(x).
		\end{aligned}
		\right.
	\end{equation}
	Let the probability flux
	\begin{equation}\label{pro flux}
		J(x,t):=b(x)p(x,t)-\frac{1}{2}\nabla\cdot \big(\Sigma (x)p(x,t)\big).
	\end{equation}
	Then the Fokker-Planck equation can be written as
	\begin{equation*}
		\frac{\partial p(x,t)}{\partial t} +\nabla \cdot J=0.
	\end{equation*}
	Under periodic boundary condition and the assumption on the uniformly positive definition of diffusion matrix $\Sigma(x)$ \cite{GA1}, the stochastic process $X_t$ is ergodic. Then  the unique invariant distribution  $p_s(x)$ exists and solves
	\begin{equation}\label{sfp}
		\begin{aligned}
			\nabla \cdot J_s :=\nabla \cdot \big(b(x)p_s(x)\big)-\frac{1}{2}\nabla \cdot\nabla\cdot \big(\Sigma(x)p_s(x)\big)=0.
		\end{aligned}
	\end{equation}
	This implies the equilibrium probability flux is a divergence-free vector field. If we consider a reflecting boundary condition in a bounded domain for Fokker-Planck equation (\ref{FP}),
	then the stationary flux vanishes $\color{black}{J_s(x)=0}$.  This is a  stronger condition than (\ref{sfp}), since it involves $d$ equations, whereas the divergence-free equation (\ref{sfp}) is only one equation. Thus, in general, the  stationary probability flux does not vanish. If the stochastic process $X_t$ satisfies the detailed balance equation,
	\begin{equation}\label{DBE}
		p_s(x)p(y,t|x,s)=p_s(y)p(x,t|y,s),\quad \forall x,y\in \mathcal{M},\, s<t,
	\end{equation}
	then the stationary probability flux is zero. Therefore, for a time-reversible process,  the probability flux vanishes in steady state. In what follows, we will assume the time-homogeneous process $X_t$ is reversible. Conditions on the coefficients $b(x)$ and  $\Sigma(x)$ that are both necessary and sufficient for the detailed balance to be satisfied can be founded in \cite{handbook1,handbook2}, and this paper will do not  discuss it further.
	
	\begin{remark}
		The fact that the solution of Fokker-Planck equation is a probability density means that
		\[
		\int_{\mathcal{M}} p(x,t)dx=1, \quad p(x,t) \geq 0 \quad \forall t\in \mathbb{R}^+,
		\]
		and
		\[
		\int_{\mathcal{M}} p_s(x)dx=1,\quad p_s(x) \geq 0.
		\]
	\end{remark}
	
	Next, our objective is to establish the connection between  Koopman operator  and the infinitesimal generator. To accomplish this, we recall  Koopman operator in Subsection \ref{Koopman}.
	
	\subsection{Koopman operator}\label{Koopman}
	For a continuous-time deterministic system \cite{Koopman_RDS,PF_Koopman} defined on measure space ($\mathcal{M},\mathfrak{B},\nu$),
	\begin{equation*}
		\left\{\begin{aligned}
			\dot{x}&=\bm{F}(x), \quad x\in \mathcal{M},\\
			x(0)&=x_0,
		\end{aligned}
		\right.
	\end{equation*}
	where $\mathcal{M}\subseteq \mathbb{R}^d$ is the state space, $\mathfrak{B}$ is a sigma-algebra, and $\nu$ is a measure. The flow map (or evolution operator) is $\phi^{t}(x_0):=x(t,x_0)$.  Koopman operator $\mathcal{K}^t$ acts on the real-valued  observable of the state space, $\psi: \mathcal{M}\rightarrow\mathbb{R}$, and we define the Koopman operator  by
	\begin{equation}\label{K-def}
		\mathcal{K}^t\psi (x)=\psi \circ \phi^t(x)=\psi\big(\phi^t(x)\big),
	\end{equation}
	where $\circ$ denotes the function composition. If the observable $\varphi: \mathcal{M}\rightarrow \mathbb{R}$ and  satisfies
	\begin{equation}\label{K-eig}
		\mathcal{K}^t\varphi(x)=\mu(t)\varphi(x),
	\end{equation}
	then $\varphi(x)$ is an  eigenfunctions of the  Koopman operator and the corresponding $\mu(t)$ is the eigenvalue. The Koopman operator $\mathcal{K}^t$ is an infinite-dimensional linear operator and  describes the evolution of  observables, and lifts  the finite-dimensional nonlinear dynamical system $\phi^t$ to a linear system in infinite dimensions.
	
	For a continuous-time random dynamical system (RDS)   $(\mathcal{M},\mathcal{F},P,\Phi^t)$ \cite{Koopman_RDS}, the stochastic Koopman operator is defined as
	\begin{equation}\label{sto_K}
		\mathcal{K}^t\psi(x)=\mathbb{E}\big[\psi\circ\Phi^t(x)\big]=\mathbb{E}\big[\psi\big(\Phi^t(x)\big)\big],
	\end{equation}
	where $\mathbb{E}[\cdot]$ is the expectation value with respect to the probability measure $P$. For autonomous SDE (\ref{model}), the flow map $\Phi^t(x)$ in (\ref{sto_K})   is defined as
	\[
	\Phi^t(x):=x+\int_0^t b(X_s)ds+\sigma(X_s)dW_s:=X_t^x.
	\]
	The  Koopman operators mentioned previously are assume to be  time-homogeneous Markovian. For inhomogeneous time-varying random dynamical system $(\mathcal{M},\mathcal{F},P,\tilde{\Phi}^{t,t_0})$, the Koopman operator is defined by a two-parameter family of operator: $\mathcal{K}^{t,t_0}\psi(x)=\mathbb{E}\big[\psi\big(\tilde{\Phi}^{t,t_0}(x)\big)\big]$. We remark   the difference  between the definitions of two kinds of Koopman operators.  For time-homogeneous system, the Koopman operator is associated with the flow map $\Phi^{\Delta t}$ only dependent on the  time interval $\Delta t$, while the Koopman operator of time-varying system is determined by  a two-paramter flow map  $\tilde{\Phi}^{t,t_0}$ depending on both the current time $t$ and the initial time $t_0$. Since we focus on   the autonomous stochastic dynamical systems  in this work, the one-parameter Koopman operator family is only considered herein.
	
	Combining the backward Kolmogorov equation (\ref{bK}) and the definition (\ref{sto_K}) of Koopman operator, we have  the following relation between the Koopman operator and infinitesimal generator of SDE,
	\begin{equation}\label{Koopman&genrator}
		\mathcal{L}\psi=\mathcal{L}_K\psi:=\lim_{t\rightarrow 0^+}\frac{\mathcal{K}^t\psi-\psi}{t}, \quad \forall \psi\in \mathcal{F},
	\end{equation}
	where $\mathcal{L}_K$ is the generator that generates the Koopman semigroup. (\ref{Koopman&genrator}) implies that the infinitesimal generator of SDE is exactly the generator of Koopman operator. Thus  we will do not distinguish them, and denote them by   $\mathcal{L}$  from now on. We note  that the spetrum of generator $\mathcal{L}$ on a specific weighted Hilbert space defined in  Section \ref{DPDD}  is discrete, and lies along the negative real axis and tends to $-\infty$. The infinitesimal generator $\mathcal{L}$ is sectorial and the semigroup $\mathcal{K}$ generated by $\mathcal{L}$ is analytic and bounded. Given $e^{0\mathcal{L}}=I$, the following properties hold.
	\begin{proposition}\cite{semigroup2}\label{prop_koopman}
		\begin{itemize}
			\item[(i)] For $\forall s,t \geq 0$, $e^{s\mathcal{L}}e^{t\mathcal{L}}=e^{(s+t)\mathcal{L}}$;
			\item[(ii)] For $\forall t \geq 0$, and every $k \in \mathbb{N}$, $\mathcal{L}^k e^{t\mathcal{L}}=e^{t\mathcal{L}}\mathcal{L}^k$;
			\item[(iii)] For $\forall t > 0$ $\frac{d^k}{dt^k}e^{t\mathcal{L}}=\mathcal{L}^k e^{t\mathcal{L}}$.
		\end{itemize}
	\end{proposition}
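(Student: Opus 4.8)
The plan is to regard $\{e^{t\mathcal{L}}\}_{t\ge 0}$ as the strongly continuous (indeed analytic) semigroup generated by $\mathcal{L}$ on the weighted Hilbert space introduced in Section \ref{DPDD} --- which is legitimate because the text preceding the statement records that $\mathcal{L}$ is sectorial --- so that all three assertions become standard facts about $C_0$-semigroups; the only genuine work is verifying the hypotheses and keeping careful track of domains. For (i) I would bypass abstract theory and argue directly from the probabilistic definition (\ref{sto_K}). Writing $\mathcal{K}^t\psi(x)=\mathbb{E}\big[\psi(X^x_t)\big]$ and invoking the Markov property together with the time-homogeneity of (\ref{model}) (i.e.\ the Chapman--Kolmogorov identity for the transition kernel), one gets
\[
\mathcal{K}^s(\mathcal{K}^t\psi)(x)=\mathbb{E}\big[(\mathcal{K}^t\psi)(X^x_s)\big]=\mathbb{E}\big[\psi(X^x_{s+t})\big]=\mathcal{K}^{s+t}\psi(x)
\]
for all $s,t\ge 0$, while $\mathcal{K}^0=I$ holds by construction; since $\mathcal{L}$ generates this semigroup via (\ref{Koopman&genrator}), this is exactly $e^{s\mathcal{L}}e^{t\mathcal{L}}=e^{(s+t)\mathcal{L}}$.

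Next I would treat the $k=1$ case of (iii), which also yields the $k=1$ case of (ii). Using (i), for $h>0$ and any $\psi$ in the Hilbert space,
\[
\frac{e^{(t+h)\mathcal{L}}-e^{t\mathcal{L}}}{h}\,\psi=\frac{e^{h\mathcal{L}}-I}{h}\,e^{t\mathcal{L}}\psi=e^{t\mathcal{L}}\,\frac{e^{h\mathcal{L}}-I}{h}\,\psi ,
\]
where the middle expression converges to $\mathcal{L}e^{t\mathcal{L}}\psi$ as $h\to 0^+$ because the analytic semigroup maps the whole space into $D(\mathcal{L})$ for every $t>0$, and the right expression converges to $e^{t\mathcal{L}}\mathcal{L}\psi$ whenever $\psi\in D(\mathcal{L})$. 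The left-hand derivative is handled by writing the backward difference quotient as $e^{(t-h)\mathcal{L}}\frac{e^{h\mathcal{L}}-I}{h}\psi$ and combining the uniform boundedness of $\{e^{s\mathcal{L}}\}$ on compact $s$-intervals with strong continuity. This establishes $\frac{d}{dt}e^{t\mathcal{L}}=\mathcal{L}e^{t\mathcal{L}}=e^{t\mathcal{L}}\mathcal{L}$ on $D(\mathcal{L})$.

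Finally, (ii) and (iii) for general $k$ follow by induction once one knows that the analytic semigroup leaves each $D(\mathcal{L}^k)$ invariant: for $\psi\in D(\mathcal{L}^k)$ one iterates the intertwining relation $\mathcal{L}e^{t\mathcal{L}}=e^{t\mathcal{L}}\mathcal{L}$ to obtain $\mathcal{L}^k e^{t\mathcal{L}}\psi=e^{t\mathcal{L}}\mathcal{L}^k\psi$, which is (ii), and then
\[
\frac{d^k}{dt^k}e^{t\mathcal{L}}=\frac{d}{dt}\Big(\frac{d^{k-1}}{dt^{k-1}}e^{t\mathcal{L}}\Big)=\frac{d}{dt}\big(\mathcal{L}^{k-1}e^{t\mathcal{L}}\big)=\mathcal{L}^{k-1}\frac{d}{dt}e^{t\mathcal{L}}=\mathcal{L}^k e^{t\mathcal{L}},
\]
using the $k=1$ case and (ii), which is (iii). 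I expect the main obstacle to be purely the functional-analytic bookkeeping: justifying the smoothing property $e^{t\mathcal{L}}(\text{space})\subseteq D(\mathcal{L}^k)$ for $t>0$, the invariance of $D(\mathcal{L}^k)$, and the interchange of limits/derivatives with the unbounded operator $\mathcal{L}$. All of these rest on the sectoriality of $\mathcal{L}$ and the consequent analyticity of the semigroup stated just before the proposition, so they may be quoted from standard semigroup theory.
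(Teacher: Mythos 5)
Your argument is correct, but note that the paper does not prove this proposition at all: it is quoted verbatim from the semigroup literature (the citation is to Pazy's book), so there is no in-paper proof to match. What you have written is essentially the standard proof that the cited reference contains, with one pleasant specialization: you obtain the semigroup law (i) directly from the probabilistic definition (\ref{sto_K}) via time-homogeneity and the Markov (Chapman--Kolmogorov) property, which is more concrete and better adapted to the stochastic Koopman setting than the abstract statement, while (ii) and (iii) are the usual difference-quotient and induction arguments for an analytic semigroup. The only step deserving an explicit word is the interchange $\frac{d}{dt}\big(\mathcal{L}^{k-1}e^{t\mathcal{L}}\big)=\mathcal{L}^{k-1}\frac{d}{dt}e^{t\mathcal{L}}$ in your induction: pulling the unbounded operator $\mathcal{L}^{k-1}$ out of the limit of difference quotients requires its closedness together with the smoothing property $e^{t\mathcal{L}}(\text{space})\subseteq D(\mathcal{L}^{k})$ for $t>0$, both of which follow from the sectoriality/analyticity you invoke, so your deferral of this bookkeeping to standard semigroup theory is legitimate and consistent with the paper's treatment of the result as a quoted fact.
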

	This indicates that there is a neat connection between the eigenpairs of Koopman operator $\mathcal{K}^t=e^{t \mathcal{L}}$ and generator $\mathcal{L}$,
	\begin{equation}\label{eig_relation}
		\mathcal{K}^t\varphi(x)=\mu(t)\varphi(x)=e^{t \mathcal{L}}\varphi(x)=e^{t \lambda}\varphi(x),
	\end{equation}
	where $(\lambda,\varphi)$ are the eigenpairs of  generator $\mathcal{L}$. Thus once we have the approximation of eigenpairs of Koopman operator, the eigendecomposition of generator $\mathcal{L}$ is available.
	
	\section{Dynamic Probability Density Decomposition}\label{DPDD}
	Assume that (\ref{DBE}) is satisfied as we discussed in Subsection \ref{generator_FKO}, then we have the vanished stationary probability flux and obtain the equality
	\begin{equation}\label{ps}
		b(x)p_s(x)=\frac{1}{2}\nabla\cdot \big(\Sigma(x)p_s(x)\big).
	\end{equation}
	Consider a set of functions $p(x,t) \in \mathfrak{D}(\mathcal{L}^*)$, and next we derive  a connection between the action of the infinitesimal generator on observables and the action of Fokker-Planck operator on distributions. This is a generalization of the relation for gradient flows presented in  \cite{book_diffusion1}.
	\begin{proposition}\label{adj_pro} Let $p(x,t)=f(x,t)p_s(x)$. Then the following holds,
		\begin{equation}\label{adj}
			\mathcal{L}^*p=p_s\mathcal{L}f.
		\end{equation}	
	\end{proposition}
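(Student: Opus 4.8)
The plan is to substitute $p(x,t)=f(x,t)p_s(x)$ directly into the definition of the Fokker--Planck operator and expand every derivative with the Leibniz rule, treating $b p_s$ and the symmetric matrix field $\Sigma p_s$ as single blocks, since these are exactly the quantities controlled by the equilibrium relations. The time variable is a passive parameter and both operators act only in $x$, so I would first write
\[
\mathcal{L}^*(f p_s)=-\nabla\cdot\big(f\,(b p_s)\big)+\tfrac12\,\nabla\cdot\nabla\cdot\big(f\,(\Sigma p_s)\big).
\]
One application of the product rule to the first term gives $-\nabla f\cdot(b p_s)-f\,\nabla\cdot(b p_s)$; two applications to the second term produce a term $\tfrac12(\Sigma p_s):\nabla\nabla f$, a cross term of the form $\nabla f\cdot\nabla\cdot(\Sigma p_s)$ (after collecting the two equal contributions), and a purely $f$-proportional term $\tfrac12 f\,\nabla\cdot\nabla\cdot(\Sigma p_s)$.

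The second step uses the two equilibrium identities. The vanishing stationary flux, equation~(\ref{ps}), reads $b p_s=\tfrac12\nabla\cdot(\Sigma p_s)$; substituting $\nabla\cdot(\Sigma p_s)=2 b p_s$ into the cross term and combining it with $-\nabla f\cdot(b p_s)$ makes all terms linear in $\nabla f$ collapse to exactly $p_s\,b\cdot\nabla f$. The surviving $f$-proportional terms are $f\big(-\nabla\cdot(b p_s)+\tfrac12\nabla\cdot\nabla\cdot(\Sigma p_s)\big)=f\,\mathcal{L}^* p_s$, which vanishes because $p_s$ is the invariant density, i.e.\ $\mathcal{L}^* p_s=0$ by the stationary Fokker--Planck equation~(\ref{sfp}). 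Finally $(\Sigma p_s):\nabla\nabla f=p_s\,\Sigma:\nabla\nabla f$, so what remains is $p_s\big(b\cdot\nabla f+\tfrac12\Sigma:\nabla\nabla f\big)=p_s\,\mathcal{L}f$, which is (\ref{adj}).

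The computation is routine once the bookkeeping is fixed; the only point requiring care is the index placement in the double divergence $\nabla\cdot\nabla\cdot(\Sigma v)$ and in the contraction $\Sigma:\nabla\nabla f$, where symmetry of $\Sigma$ (hence of $\Sigma p_s$) is what lets one identify $\sum_i\partial_i(\Sigma p_s)_{ij}$ with the $j$-th component of $\nabla\cdot(\Sigma p_s)$ irrespective of which slot the divergence contracts. Thus the main (and only mild) obstacle is keeping the tensor contractions consistent; there is no analytic subtlety, and the smoothness of $b$, $\Sigma$, $p_s$ and $f$ --- already available since $p\in\mathfrak{D}(\mathcal{L}^*)$ --- suffices to justify all the differentiations.
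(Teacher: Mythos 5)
Your proposal is correct and follows essentially the same route as the paper: substitute $p=f p_s$, expand with the Leibniz rule (using symmetry of $\Sigma p_s$ for the double divergence), and use the zero-flux identity~(\ref{ps}) to collapse the terms linear in $\nabla f$ into $p_s\, b\cdot\nabla f$. The only cosmetic difference is that you dispose of the $f$-proportional remainder via $\mathcal{L}^*p_s=0$ from~(\ref{sfp}), while the paper groups it as $-f\,\nabla\cdot\bigl[b p_s-\tfrac12\nabla\cdot(\Sigma p_s)\bigr]$ and kills it again with~(\ref{ps}); both are valid and the computation is otherwise identical.
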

	
	\begin{proof}  By the definition of Fokker-Planck operator, we have 	
		\begin{equation*}
			\begin{aligned}
				\mathcal{L}^*p&=\nabla \cdot \big(-b(x)p(x,t)\big)+\frac{1}{2}\nabla \cdot\nabla\cdot \big(\Sigma(x)p(x,t)\big)\\
				&=\nabla \cdot \big(-b(x)f(x,t)p_s(x)\big)+\frac{1}{2}\nabla \cdot\nabla\cdot \big(\Sigma(x)f(x,t)p_s(x)\big)\\
				&=-b(x)p_s(x)\nabla f-f \nabla\cdot\big(b(x)p_s(x)\big)\\
				&\quad+\frac{1}{2} \nabla \cdot\Big(\Sigma(x)p_s(x)\nabla f+f\nabla\cdot \big(\Sigma(x)p_s(x)\big)\Big)\\
				&=-b(x)p_s(x)\nabla f+\frac{1}{2}\Sigma(x)p_s(x):\nabla \nabla f+\nabla  f\nabla\cdot  \big(\Sigma(x)p_s(x)\big)\\
				&\quad -f\Big(\nabla \cdot \big[b(x)p_s(x)-\frac{1}{2}\nabla \cdot \big(\Sigma(x)p_s(x)\big)\big]\Big)\\
				&=\frac{1}{2}\Sigma(x)p_s(x):\nabla \nabla f+b(x)p_s(x)f\\
				&=p_s\mathcal{L}f,
			\end{aligned}
		\end{equation*}
		where the equality (\ref{ps}) is used  in the third  step.
	\end{proof}\\
	Besides, we also have $\mathcal{L}f=p_s^{-1}\mathcal{L}^*p=p_s^{-1}\mathcal{L}^*(fp_s)=p_s^{-1}\frac{\partial (fp_s)}{\partial t}=\frac{\partial f}{\partial t}$. This shows  that $f(x,t)$ satifies the backward Kolmogorov equation.
	
	In what follows we consider the weighted Hilbert $L^2$-space $L^2(\mathcal{M};p_s)$ weighted by the stationary distribution $p_s(x)$ with norm
	\begin{equation}\label{norm_weighetd}
		||f||^2_{0,p_s}=\langle f,f\rangle_{p_s}=\int_{\mathcal{M}} |f|^2p_s(x)dx.
	\end{equation}
	We claim that the generator $\mathcal{L}$ is self-adjoint and $-\mathcal{L}$ is positive and semi-definite    with respect to $L^2(\mathcal{M};p_s)$. The self-adjointness  can be verified by noting that, for any pair of functions $f,g \in L^2(\mathcal{M};p_s)$,
	\[
	\langle\mathcal{L}f,g\rangle_{p_s}=\langle\mathcal{L}f,gp_s\rangle=\langle f,\mathcal{L}^*(gp_s)\rangle=\langle f,p_s\mathcal{L}g\rangle=\langle f,\mathcal{L}g\rangle_{p_s},
	\]
	where we have  utilized  the fact (\ref{adj}) in the third equality. The positiveness  and semi-definite   can be obtained by noting that for any function $f \in L^2(\mathcal{M};p_s)$,
	\begin{equation*}
		\begin{aligned}
			\langle -\mathcal{L}f,f\rangle_{p_s}&=\langle -\mathcal{L}f,fp_s\rangle\\
			&=-\langle b(x)\cdot\nabla f+\frac{1}{2}\Sigma:\nabla\nabla f,fp_s\rangle\\
			&=-\frac{1}{2}\langle\nabla\cdot (\Sigma p_s)\nabla f,f\rangle-\langle\frac{1}{2}\Sigma:\nabla\nabla f,fp_s\rangle\\
			&=\frac{1}{2}\langle\Sigma\nabla f,\nabla f\rangle_{p_s}\\
			&=\frac{1}{2}||\sigma^T\nabla f||_{p_s}\geq 0.
		\end{aligned}
	\end{equation*}
	We assume that the generator $\mathcal{L}$ has a set of dicscrete spetrum. Then the eigenfunctions $\{\varphi_j\}_{j=1}^{\infty}$ form an orthonormal basis of $L^2(\mathcal{M};p_s)$. Therefore, for any observable $f\in L^2(\mathcal{M};p_s)$, the spectral expression $f(x,t)=\sum_{j}f_j(t)\varphi_j(x)$ with $f_j(t)=\langle f,\varphi_j\rangle$  holds. Recall the equality (\ref{adj}) $\mathcal{L}^*p=p_s\mathcal{L}f$, which immediately  indicates that  $ \mathcal{L}^*(\varphi_j p_s)=p_s\mathcal{L}\varphi_j=p_s \lambda_j\varphi_j=\lambda_j\varphi_j p_s$. Thus, $\{\varphi_j p_s\}_{j=1}^{\infty}$ are the eigenfunctions of the Fokker-Planck operator $\mathcal{L}^*$ with respect to eigenvalues $\{\lambda_j\}_{j=1}^{\infty}$. Moreover, the eigenfunctions of $\mathcal{L}^*$ form an orthonormal basis of space $L^2(\mathcal{M};p_s^{-1})$ following the orthonormality of $\{\varphi_j\}_{j=1}^{\infty}$,
	\[\langle\varphi_i p_s,\varphi_j p_s\rangle_{p_s^{-1}}=\langle\varphi_i ,\varphi_j \rangle_{p_s}=\delta_{ij}.
	\]
	
	\subsection{The spectral expansion}\label{sub_se}
	Given an orthonormal basis of $L^2(\mathcal{M};p_s^{-1})$, we can express the solution of the Fokker-Planck equation in (\ref{FP}) in the following form,
	\begin{equation}\label{se}
		p(x,t)=e^{t\mathcal{L}^*}p_0(x)=\sum_{i=0}^{\infty}c_i(t)\varphi_i(x)p_s(x).
	\end{equation}
	In order to obtain the system for the coefficients $c_i(t)$, we substitute (\ref{se}) into the Fokker-Planck equation (\ref{FP}) and utilize the Galerkin method, we have an ODE system for the  coefficients $c_i(t), \quad \forall i=0,1,\cdots,\infty$,
	\begin{equation}\label{coe}
		\begin{aligned}
			c_i(t)&=\langle e^{t\mathcal{L}^*}p_0(x),\varphi_i(x)p_s(x)\rangle_{p_s^{-1}}\\
			&=\langle p_0(x),e^{t\mathcal{L}}\varphi_i(x)\rangle\\
			&=\langle \sum_{j=0}^{\infty} c_j(0)\varphi_j (x) p_s(x) , e^{t\mathcal{L}}\varphi_i (x)\rangle\\
			&=\sum_{j=0}^{\infty}c_j(0)\langle \varphi_j (x),e^ {\lambda_i t} \varphi_i (x)\rangle_{p_s} \\
			&=\sum_{j=0}^{\infty} e^ {\lambda_i t} \delta_{ij} c_j(0)\\
			&=e^ {\lambda_i t} c_i(0).
		\end{aligned}
	\end{equation}
	Here the eigenpair $(\lambda_i, \varphi_i)$ are the eigenvalues and eigenfunctions of the generator $\mathcal{L}$, and $c_i(0)$ are the  coodinates of the initial condition $p_0(x)$ under the basis of eigenfunctions $\{\varphi_i p_s\}_{i=1}^{\infty}$. Thus combining (\ref{se}) and (\ref{coe})  we can  represent the probability density function $p(x,t)$ as follows,
	\begin{equation}\label{se1}
		p(x,t)=\sum_{i=0}^{\infty}e^ {\lambda_i t} \varphi_i(x)p_s(x)c_i(0),
	\end{equation}
	where $c_i(0)=\langle p_0(x),\varphi_i p_s\rangle_{p_s^{-1}}$.
	\par By the spectral expansion (\ref{se1}), we can obtain the probability density function without knowing the dynamical system (\ref{model}) and the manifold $\mathcal{M}$. In fact, all we need is that  the eigendecomposition of the generator $\mathcal{L}$, which can obtained by EDMD described in Subsection \ref{dis_pdf}. Next, the data-driven method (EDMD) is going to be introduced for  the numerical approximation of the spectral expansion.
	
	\subsection{Data-driven approximation}\label{EDMD}
	In this subsection, we briefly recall the extended dynamic mode decomposition (EDMD) for deterministic systems, and apply it to the stochastic dynamical systems  with ergodicity.
	\subsubsection{Extended Dynamic mode decomposition}\label{dEDMD}
	In this subsection, we elaborate EDMD \cite{EDMD2,EDMD1}, which  is a data-driven method to approximate the Koopman operator. Assume that we are given a data set of snapshots pairs $\{(x_m,y_m)\}_{m=1}^M$,
	\[
	\bm{X}=[x_1,x_2,\cdots,x_M],\qquad \bm{Y}=[y_1,y_2,\cdots,y_M]
	\]
	satisfying  $y_m=\Phi^{\Delta t}(x_m)$, where $\Delta t$ is the temporal interval of snapshots. Typically, the data samples  are not necessarily required to line on a single trajectory. EDMD requires that a dictionary of observables, $ \mathcal{D}=\{\psi_1,\psi_2,\cdots,\psi_N|\, \psi_i\in \mathcal{F} \}$, whose span is  denoted  as $\mathcal{F}_{\mathcal{D}}\subset\mathcal{F}$. For brevity, we define the vector-valued observable $\bm{\psi}: \mathcal{M}\rightarrow\mathbb{R}^{1\times N}$,  where $\bm{\psi}(x)=[\psi_1(x), \psi_2(x), \cdots, \psi_N(x)]^T$.
	EDMD constructs a finite-dimensional approximation $\bm{K}\in \mathbb{R}^{N\times N}$ of Koopman operator  by solving the least-squares problem,
	\begin{equation}\label{ls_edmd}
		\min_{\bm{K}\in \mathbb{R}^{N\times N}}\|\bm{K}\bm{\psi}(\bm{X})-\bm{\psi}(\bm{Y})\|_F^2=\min_{\bm{K}\in \mathbb{R}^{N\times N}}\sum_{m=1}^{M}\|\bm{K}\bm{\psi}(x_m)-\bm{\psi}(y_m)\|_2^2,
	\end{equation}
	where $\bm{\psi}(\bm{X})=[\bm{\psi}(x_1), \cdots, \bm{\psi}(x_M)], \quad \bm{\psi}(\bm{Y})=[\bm{\psi}(y_1), \cdots, \bm{\psi}(y_M)].$
	The $\bm{K}$  minimizes (\ref{ls_edmd}) is
	\[
	\bm{K}=\bm{\psi}(\bm{Y})\bm{\psi}(\bm{X})^{\dagger},
	\]
	where $\dagger$ denotes the pseudoinverse. This approach will require expensive computation for large $M$, since it needs the pesudoinverse of the $N\times M$ matrix $\bm{\psi}(\bm{X})$. Thus one usually takes another approach to compute $\bm{K}$,
	\begin{equation}\label{approx_K}
		\bm{K}= AG^{\dagger},
	\end{equation}
	where the relationship $\bm{\psi}(\bm{X})^{\dagger}=\bm{\psi}(\bm{X})^T\big(\bm{\psi}(\bm{X})\bm{\psi}(\bm{X})^T\big)^{-1}$ is  utilized  and
	\begin{equation}\label{AG}
		\begin{aligned}
			G&=\frac{1}{M}\sum_{m=1}^M \bm{\psi}(x_m)\bm{\psi}(x_m)^T,\\
			A&=\frac{1}{M}\sum_{m=1}^M \bm{\psi}(y_m)\bm{\psi}(x_m)^T.
		\end{aligned}
	\end{equation}
	As a result, we have $\bm{K}$ as a finite-dimensional approximation of  $\mathcal{K}$. Thus, if  $\xi_i\in \mathbb{R}^{N\times 1}$ is the $i$-th left eigenvector of $\bm{K}$ with the associated eigenvalue $\mu_i$, then the EDMD approximation of an eigenfunction of $\mathcal{K}$ is given by
	\[
	\phi_i(x)=\xi_i^T\bm{\psi}(x).
	\]

	\subsubsection{EDMD in weighted Hilbert space}\label{dis_pdf}
	Although EDMD was proposed for the deterministic systems and the deterministic Koopman operator in the beginning, it can also be applied to data from stochastic systems without changing the procedure. As shown in Subsection \ref{dEDMD}, EDMD can give  the eigenpair $(\mu_i,\varphi_i)$ of Koopman operator $
	\mathcal{K}$, whose eigenfunctions are  also the eigenfunctions of the generator $\mathcal{L}$ with respect to the eigenvalues $\lambda_i=\frac{\log \mu_i}{\Delta t}$ in Hilbert space $L^2(\mathcal{M})$. But we actually need the eigenpairs in the weighted Hilbert space $L^2(\mathcal{M};p_s)$, we modify EDMD to adapt to this  situation. The data set $\{x_i\}_{i=0}^M$ is sampling from stationary density function $p_s$, and EDMD will employ these data to approximate the eigendecomposition in weighted space. In this weighted Hilbert space, for $ \forall f,g \in L^2(\mathcal{M};p_s)$, we have an estimation for the inner product of two observables by  these stationary data,
	\[
	\langle f,g\rangle_{p_s}\approx \frac{1}{M}\sum_{i=0}^M f(x_i)g(x_i).
	\]
	To obtain the eigenpairs  of generator $\mathcal{L}$ , we utilize the data  observed at those sample points, i.e., the snapshot matrix is as follows,
	\begin{equation}\label{sp}
		\big(\bm{\psi}(x_0), \bm{\psi}(x_1),\cdots, \bm{\psi}(x_M)\big)=
		\begin{pmatrix}
			\psi_1(x_0)& \psi_1(x_1) & \cdots & \psi_1(x_M)\\
			\psi_2(x_0)& \psi_2(x_1) & \cdots & \psi_2(x_M)\\
			\vdots & \vdots & \ddots & \vdots\\
			\psi_N(x_0)& \psi_N(x_1) & \cdots & \psi_N(x_M)\\
		\end{pmatrix},
	\end{equation}
	where $\bm{\psi}=\big(\psi_1,\psi_2,\cdots,\psi_N\big)^T$ is the vector-valued observation function. Given
	\begin{equation}\label{sps}
		\begin{aligned}
			\bm{\psi}(X)&= \begin{pmatrix}
				\psi_1(x_0)& \psi_1(x_1) & \cdots & \psi_1(x_{M})\\
				\psi_2(x_0)& \psi_2(x_1) & \cdots & \psi_2(x_{M})\\
				\vdots & \vdots & \ddots & \vdots\\
				\psi_N(x_0)& \psi_N(x_1) & \cdots & \psi_N(x_{M})\\
			\end{pmatrix}, \\
			\bm{\psi}(Y)&=\begin{pmatrix}
				\psi_1(y_0)& \psi_1(y_1) & \cdots & \psi_1(y_{M})\\
				\psi_2(y_0)& \psi_2(y_1) & \cdots & \psi_2(y_{M})\\
				\vdots & \vdots & \ddots & \vdots\\
				\psi_N(y_0)& \psi_N(y_1) & \cdots & \psi_N(y_{M})\\
			\end{pmatrix},
		\end{aligned}
	\end{equation}
	where $y_i=\Phi^{\Delta t}(x_i), \, \Phi$ is the evolution operator associated with the stochastic dynamical system. We can get the approximation of $\Phi$ by using   Euler-Maruyama method, Milstein scheme, etc..  We apply the standard EDMD method in Subsection \ref{dEDMD} to above snapshot matrixs (\ref{sps}) and get  the eigendecomposition in weighted space.
	Ultimately, we obtain  the approximate dynamic probability density decomposition,
	\begin{equation}\label{pN}
		p_N(x,t)=\sum_{i=1}^{N}e^{\lambda_i t}\varphi_i(x)p_s(x)\tilde{c}_i(0),
	\end{equation}
	where $\color{black}{\tilde{c}_i(0)}$
	is an Monte-Carlo approximation of coefficients $c_i(0)=\langle p_0(x),\varphi_i p_s\rangle_{p_s^{-1}}=\mathbb{E}^0(\varphi_i)$ in the dynamic probability density decomposition (\ref{se1}). $(\lambda_i,\varphi_i)$ is the eigenpairs of generator $\mathcal{L}$, which is associated with the eigenpairs $(\mu_i,\varphi_i)$, acquired by EDMD, of Koopman operator.
	
	If we compute   $\color{black}{c_i(0)}$ by standard Monte-Carlo integration method, we may need the abundant independent samples drawn from the initial distribution $p_0(x)$. This  is computationally expensive and the initial distribution may be hard to sample. To address this intractable problem, we use the snapshots data $\{x_i\}_{i=0}^M$ sampled from stationary distribution to approximate the expectation by means of the importance sampling approach. The alternative distribution is the stationary distribution,  which is needed to be absolutely continuous with respect ro the initial distribution $p_0(x)$. Then the importance sampling estimator of $c_i(0)$ is given by
	\[
	\tilde{c}_i(0)=\frac{1}{M+1}\sum_{i=0}^M \varphi_i(x_i)p_0(x_i)/p_s(x_i).
	\]
	In this way, the variance of estimator will be zero when the stationary density is exactly equal to $p_0(x)\varphi_i(x)/c_i(0)$. Nevertheless, we can not really reach this zero-variance, since we do not know the accurate value of $c_i(0)$. If the stationary  distribution $p_s(x)$ is  proportional to the product of the initial density $p_0(x)$ and the basis function $\varphi_i(x)$, the variance can be significantly reduced.
	In the end, we summarize  our approach in  Algoritm \ref{DPDD_alg}.
	\begin{algorithm}[h]
		\caption{Dynamic Probability Density decomposition(DPDD)} \label{DPDD_alg}
		\raggedright{\bf Input:}  
		Data set of snapshots pairs $\{(x_i,y_i)\}_{i=0}^M$ sampling from invariant density $p_s(x)$, such that $y_i=\Phi^{\Delta t}(x_i)$; the initial condition $p_0(x)$. \\                           
		\raggedright{\bf Output:} The probability density function $p(x,t)$ and the eigenpairs $\{(\lambda_{i},\varphi_{i})\}$ of the infinitesimal generator $\mathcal{L}$.\\
		\raggedright \textbf{1}: Choose the appropriate observational space $\mathcal{F}_{\mathcal{D}}=span\{\psi_1,\psi_2,\cdots,\psi_N\}$, and generate the snapshots matrices $\bm{\psi}(X),\, \bm{\psi}(Y)$ given in (\ref{sps}).\\
		\raggedright \textbf{2}: Compute the Koopman matrix using formula (\ref{approx_K}) and (\ref{AG}), then the eigenpairs $\{(\mu_{i},\varphi_{i})\}$ can obtained by the eigendecomposition of Koopman matrix. And the eigenpairs $\{(\lambda_{i},\varphi_{i})\}$ of the infinitesimal generator $\mathcal{L}$ can be computed by the use of the relationship (\ref{eig_relation}).\\
		\raggedright \textbf{3}: Approximate the probability density function $p(x,t)$ by equation (\ref{se1}), with coefficients $\color{black}{\tilde{c}_i(0)=\frac{1}{M+1}\sum_{i=0}^M p_0(x_i)\varphi_i(x_i)/p_s(x_i)}$.
	\end{algorithm}
	\\

	\subsection{Diffusion forecast}\label{df}
	As another nonparametric forecasting method, diffusion forecast (DF)   \cite{df1,df,book_diffusion1} shares the similar idea to the proposed dynamic probability density decomposition, and treated  stochastic gradient systems with isotropic diffusion coefficient at the beginning. In the  diffusion forecast, diffusion map is a key technique  to acquire the approximate eigendecomposition of an operator,  which is the infinitesimal generator of a stochastic gradient flow with the invariant density function of system state as the potential energy. Then diffusion forecast uses the eigenfunctions of this specified generator as the basis to estimate the probability density function with a finite-dimensional truncation. In this subsection, we present the diffusion forecast and explore the relation between the diffusion forecast and the probability density forecast of Koopman operator.

	For the  diffusion forecast method, we consider  the same dynamical system as (\ref{model}).
	DF assumes that the system is ergodic with the unique equilibrium density $p_s(x)$  and given a time series data  $x_i:=x(t_i)\sim p_{s}(x)$. The goal of DF is to approximate $p(x,t)$ utilizing the given data without knowing the model. DF proposes using the diffusion maps algorithm to estimate the generator $\hat{\mathcal{L}}=\nabla \log \big(p_s\big)\cdot\nabla+\Delta $ of gradient flow,
	\begin{equation}\label{grad_flow}
		dx=\nabla \log\big(p_s(x)\big)dt+\sqrt{2}dW_t.
	\end{equation}
	The gradient flow system has the exactly same stationary distribution as the underlying system (\ref{model}). As a particular case of generator,  $\hat{\mathcal{L}}$ is naturally self-adjoint and non-positive with respect to the weighted space $L^2(\mathcal{M};p_s)$ as we have previously discussed in this section. By the  diffusion map algorithm, DF estimates the eigenpairs $(\hat{{\lambda}}_i,\hat{{\varphi}_i})$ of generator $\hat{\mathcal{L}}$, rather than those of generator $\mathcal{L}$ of underlying system (\ref{model}). Then the density function in DF is expressed as  the following form,
	\begin{equation}\label{se_df}
		p(x,t)=\sum_{k=0}^{\infty}\hat{c}_k(t)\hat{\varphi}_k(x)p_s(x),
	\end{equation}
	where the coefficients $\hat{c}_k(t)$ can be computed as follows,
	\begin{equation}\label{c_df}
		\begin{aligned}
			\hat{c}_k(t)&=\langle e^{t\mathcal{L}^*}p_0, \hat{\varphi}_kp_s\rangle_{{p_s}^{-1}}\\
			&=\langle p_0, e^{t\mathcal{L}}\hat{\varphi}_k\rangle\\
			&=\langle \sum_{j=0}^{\infty} \hat{c}_j(0)\hat{\varphi}_jp_s, e^{t\mathcal{L}}\hat{\varphi}_k\rangle\\
			&= \sum_{j=0}^{\infty} \langle \hat{\varphi}_j, e^{t\mathcal{L}}\hat{\varphi}_k\rangle_{p_s}  \hat{c}_j(0).
		\end{aligned}
	\end{equation}
	In practical computation, DF also truncates the summation  with a finite terms, and produces a matrix-vector multiplication, $\vec{\hat{c}}(t)=B\vec{\hat{c}}(0)$, with $k$-th component $\hat{c}_k(t)$. And the $kj$-th entry of the matrix $B$ is $B_{kj}=\langle \hat{\varphi}_j, e^{t\mathcal{L}}\hat{\varphi}_k\rangle_{p_s}$. Since the operator $\hat{\mathcal{L}}$ is the generator relevant to the  gradient flow rather than the underlying system (\ref{model}), DF  needs to approximate the coefficient matrix $B$. But our approach can compute the coefficients analytically and explictly by using the generator $\mathcal{L}$ of underlying system (\ref{model}).
	In DF method, the components of $B$ is numerically estimated as
	\begin{equation}\label{B}
		B_{kj}=\langle\hat{\varphi}_j, e^{t\mathcal{L}}\hat{\varphi}_k\rangle_{p_s}\approx \langle\hat{\varphi}_j, S_{t}\hat{\varphi}_k\rangle_{p_s}\approx\frac{1}{M-1}\sum_{i=1}^{M-1}\hat{\varphi}_j(x_i)\hat{\varphi}_k(x_{i+1}),
	\end{equation}
	where $S_{t}$ is a shift operator such that $S_{t}\hat{\varphi}_k(x_i)=\hat{\varphi}_k(x_{i+1})$.  We discover  that
	\[
	B_{kj}=\langle\hat{\varphi}_j, e^{t\mathcal{L}}\hat{\varphi}_k\rangle_{p_s}=\langle\hat{\varphi}_j, \mathcal{K}^t\hat{\varphi}_k\rangle_{p_s}=\langle\hat{\varphi}_j, \mathbb{E}\big(\hat{\varphi}_k\circ \Phi^t(x)\big)\rangle_{p_s}.
	\]
	Therefore  the shift operator $S_t$ actually follows the action criteria of deterministic Koopman operator, and is used as an approximation of the stochastic Koopman operator in diffusion forecast method.
	\begin{remark} \label{particular}
		If the underlying system (\ref{model}) is  a gradient flow with the isotropic diffusion,
		\[
		dx=-\nabla Vdx+\sqrt{2/\beta} dW_t,
		\]
		then DF has a trivial model with an analytic expression for the coefficient matrix $B$. In this  special case, the underlying system (\ref{model}) is exactly same as the gradient system (\ref{grad_flow}) with the generator $\hat{\mathcal{L}}$.
		We can derive that the stationary Gibbs distribution $p_s=\frac{1}{Z}\exp\big(-\beta V(x)\big)$ is a solution of time-homogeneous Fokker-Planck equation, where $Z=\int_{\mathcal{M}}e^{-\beta V(x)}dx$ is the normalization constant,
		\begin{equation*}
			\begin{aligned}
				p_s &=\frac{1}{Z}\exp\big(-\beta V(x)\big) \\
				\Rightarrow \quad \frac{\nabla p_s}{p_s}&=\frac{\nabla (e^{-\beta V})}{e^{-\beta V}}=\nabla \log (e^{-\beta V})=-\beta \nabla V\\
				\Rightarrow \quad \nabla p_s&=-\beta \nabla V\cdot p_s\\
				\Rightarrow \quad \mathcal{L}^*p_s&=\nabla\cdot(\beta \nabla V\cdot p_s)+\Delta p_s=0.
			\end{aligned}
		\end{equation*}
		For the  sake of simplicity, we assume that the normalization constant $Z=1$ and $\beta=1$. Thus
		\[
		\hat{\mathcal{L}}=\nabla \log(p_s)\cdot \nabla +\Delta=\nabla \log\big(\exp(-V)\big)\cdot \nabla +\Delta,
		\]
		and the generator of underlying system (\ref{model}),  $\mathcal{L}=-\nabla V\cdot \nabla+\Delta$.
		Consequently,  $ \mathcal{L}=\hat{\mathcal{L}}$, which can be estimated by the  diffusion map algorithm. Let $\mathcal{L}\hat{{\varphi}}_i=\hat{{\lambda}}_i\hat{{\varphi}}_i$.
		In this case, DF can achieve  the matrix $B$ analytically as the Koopman operator approach, i.e.,
		\[
		B_{kj}=\langle\hat{\varphi}_j, e^{t\mathcal{L}}\hat{\varphi}_k\rangle_{p_s}=\langle\hat{\varphi}_j, e^{t\hat{\mathcal{L}}}\hat{\varphi}_k\rangle_{p_s}=e^{\hat{\lambda}_k t}\delta_{jk}.
		\]
	\end{remark}
	
	In DF method, the role of diffusion map is to estimate the eigenpairs of generetor of gradient flow system (\ref{grad_flow}). In this scenario, the diffusion map  chooses an appropriate exponentially decaying function as the kernel $k_{\epsilon}$ with bandwidth $\epsilon$, employs the kernel density estimate method to approximate the stationary density, and reconstruct a new kernel by removing the sampling bias from the stationary distribution. Then we apply a weighted graph Laplacian normalization to construct a transition kernel (or transition density) $p_{\epsilon}$  corresponding  to a Markov chain. The  integral operator $P_{\epsilon}$ is defined by
	\[
	P_{\epsilon}f(x):=\int_{\mathcal{M}}p_{\epsilon}(x,y)f(y)p_s(y)dy.
	\]
	Then its generator $L_{\epsilon}f(x):=\frac{P_{\epsilon}f(x)-f(x)}{\epsilon}$ and  converges to the infinitesimal generator of SDE (\ref{grad_flow}), which is exactly what DF method seeks. Given a set  of data points $\{x_i\}_{i=1}^M$  sampled from the stationary distribution, the diffusion map algorithm furnishes the matrix approximation of the generator $\hat{\mathcal{L}}$, and the eigenpairs can  be computed by SVD.
	In what follows, we describe DF  and  the diffusion map algorithm in Algorithm \ref{DF_alg} and Algorithm \ref{DM_alg},  respectively.
	\begin{algorithm}[H]
		\caption{Diffusion forecast(DF)} \label{DF_alg}
		\raggedright{\bf Input:} Time series $\{x_i:=x(t_i)\}_{i=1}^M \sim p_s(x)$, the equilibrium measure $p_s(x)$ and initial density function $p_0(x)$.\\
		\raggedright{\bf Output:} The probability density function $p(x,t)$.\\
		\raggedright \textbf{1}: Compute the eigenpairs ${(\hat{\lambda}_i,\hat{\varphi}_i)}$ of a specific operator $\hat{\mathcal{L}}$, also the generator of gradient flow (\ref{grad_flow}) via diffusion maps algorithm outlined in Algorithm \ref{DM_alg}.\\
		\raggedright \textbf{2}: Estimate the probability density function $p(x,t)$ by formula (\ref{se_df}), (\ref{c_df}) and (\ref{B}) with $\hat{c}_j(0)=\frac{1}{M}\sum_{i=1}^M p_0(x_i)\hat{\varphi}_i(x_i)$.
	\end{algorithm}
	\begin{remark} Through the comparison between our approach (DPDD) and diffusion forecast, we summarize  the similarity and the difference between these two nonparametric methods. First of all, both of two methods require that the underlying system is ergodic and the given data sets are sampled from the unique invariant density function. One can ensemble the data from multiple trajectories, which can be realized by parallel simuluation (or observation), rather than a single long-time trajectory.  Secondly, for both approaches, the explicit expression of the invariant density is needed to fulfil an accurate estimation, while both can also work without knowing the invariant density through an appropriate method to approximate the density.  Thirdly, for our approach, the orthonormal basis are the eigenfunctions of generator $\mathcal{L}$  obtained from EDMD utilizing the relationship of Koopman operator $\mathcal{K}$ and SDE generator $\mathcal{L}$, while the basis set for diffusion forecast method is the eigenfunctions of generator of a gradient flow system, computed by the diffusion map algorithm. Therefore, there exists approximation error when computing coefficients $c_i(t)$ for the diffusion forecast, while DPDD method analytically solves the coefficients. Finally, diffusion forecast is designed for autonomous dynamical systems, while DPDD can work for non-autonomous stochastic  dynamical systems. For time-varying dynamical systems, the Koopman operator is dependent on time. Thus EDMD algorithm should be applied repeatedly for each temporal window where the dynamics  is assumed to be autonomous.
	\end{remark}

	\begin{algorithm}[h]
		\caption{Diffusion Map Algorithm \cite{dm2}} \label{DM_alg}
		\raggedright{\bf Input:} Data $\{x_i:=x(t_i)\}_{i=1}^M$ sampled from the invariant density $p_s(x)$ of gradient flow system. 
		\\
		\raggedright{\bf Output:} The approximate eigenpairs $\{(\hat{\lambda}_i,v_i)\}$ of generator of gradient flow.\\
		\raggedright \textbf{1}: Choose a kernel $k(x,y)=\exp (-\|x-y\|^2/{2\epsilon^2})$ with an appropriate smoothing paramter called the bandwidth $\epsilon$.\\
		\raggedright \textbf{2}: Compute $p_{\epsilon}(x)=\sum_{j}k(x,x_j)$, renormalize and create the matrix $\hat{K}$, such that, $\hat{K}_{ij}= \frac{k(x_i,x_j)}{\sqrt{p_{\epsilon(x_i)}p_{\epsilon(x_j)}}}$.\\
		\raggedright \textbf{3}:	Define $D_i=\sum_j\hat{K}_{ij}$ and construct a stochastic matrix $P=D^{-1}\hat{K}$, with entries $P_{ij}=\frac{\hat{K}_{ij}}{D_i}$.\\
		\raggedright \textbf{4}:  Define the generator matrix $\hat{L}=\frac{P-I}{\epsilon}$, and compute the first few eigenpairs of $\hat{L}$, s.t., $\hat{L}v=\hat{{\lambda}}v$.
	\end{algorithm}
	
	\begin{remark}
		We note that the integral operator $P_{\epsilon}$ defined in the diffusion map algorithm is exactly the stochastic Koopman operator. Assuming that the invariant measure $\nu$ is absolutely continuous with respect to the Lebesgue measure, and the Radon–Nikodym derivative is $p_s(x)$,  we have,
		\begin{equation*}
			\begin{aligned}	P_{\epsilon}f(x)&=\int_{\mathcal{M}}p_{\epsilon}(x,y)f(y)p_s(y)dy\\
				&=\int_{\mathcal{M}}p_{\epsilon}(x,y)f(y)d\nu\\
				&=\mathbb{E}\left[f\big(\Phi(x)\big)\right]\\
				&=\mathcal{K}f(x),
			\end{aligned}
		\end{equation*}
		where $\Phi(x)\sim p_{\epsilon}(x,\cdot)$ is the flow map of gradient flow system (\ref{grad_flow}). Therefore, the diffusion map algorithm constructs the stochastic Koopman operator via a very different way from EDMD.
	\end{remark}

	\section{Convergence Analysis}\label{CA}
	When we use  EDMD to approximate  the probability density function of  the   stochastic dynamical system, the total error stems from three sources. One source of error, addressed in section \ref{errA1}, arises from the finite-dimensional approximation in EDMD, which is actually  a Galerkin approximation of the Koopman operator \cite{PF_Koopman,EDMD1}. The convergence is achieved  by sufficiently large number of data.  The second source of error emerges when the spectral expansion is truncated at finite terms, and the number of truncated terms is equal to the dimensionality of observation space $\mathcal{F}_{\mathcal{D}}$, which is  spanned by the  independent dictionary functions. This kind of error actually originates from the projection of Koopman operator onto the finite-dimensional subspace of the observables \cite{EDMD2}. This error is  addressed in section \ref{errA2}. Since the stochastic Koopman operator is defined as the expectation value of composition of observables and evolution operator, the data-driven method should be implemented with the expectation of data pairs. However, the expectation  of observables are usually elusive. Thus, when EDMD is applied to the stochastic system and the given data is the snapshot pairs of realized trajectories instead of expectations pairs, another source of error emerges. This kind of error will be briefly discussed in section \ref{errA3}. Finally, we give the weak convergence of the DPDD approximate probability density function to the analytic solution of Fokker-Planck equation.

	\subsection{Convergence of EDMD to a Galerkin method}\label{errA1}
	Utilizing the Galerkin method, we obtain the matrices $\hat{A}$ and $\hat{G}$ whose entries
	\begin{equation}\label{Galerkin}
		\begin{aligned}
			\hat{A}_{ij}&=\int_{\mathcal{M}}\bm{\psi}_i\big(\Phi^{\Delta t}(x)\big)\bm{\psi}_j(x)d\nu(x)=<\mathcal{K}\bm{\psi}_i,\bm{\psi}_j>_{p_s},\\
			\hat{G}_{ij}&=\int_{\mathcal{M}}\bm{\psi}_i(x)\bm{\psi}_j(x)d\nu(x)=<\bm{\psi}_i,\bm{\psi}_j>_{p_s},\\
		\end{aligned}
	\end{equation}
	where $<p,q>_{p_s}=\int_{\mathcal{M}}p(x)q(x)p_s(x)dx$ is the inner product of the observable space. Then the finite-dimensional Galerkin approximation of Koopman operator would be $\hat{\bm{K}}=\hat{A}\hat{G}^{-1}$.
	
	Let the data points $\{x_i\}_{i=1}^M$ be  drawn from the invariant distribution $\nu$ on $\mathcal{M}$ with density $\rho$ and the number of snapshots $M \rightarrow \infty$.  We define the empirical measure $\hat{\nu}_M$ by
	\begin{equation}
		\hat{\nu}_M=\frac{1}{M}\sum_{m=1}^M\delta_{x_m},
	\end{equation}
	where $\delta_{x_i}$ is the Dirac function at $x_i$. In particular, the $ij$-th elements of $A$ and $G$ are as follows,
	\begin{equation}\label{EDMD-app}
		\begin{aligned}
			A_{ij}&=\frac{1}{M}\sum_{m=1}^M \bm{\psi}_i(y_m)\bm{\psi}_j(x_m)^T=\int_{\mathcal{M}}\bm{\psi}_i\big(\bm{F}(x)\big)\bm{\psi}_j(x)d\hat{\nu}_M,\\
			G_{ij}&=\frac{1}{M}\sum_{m=1}^M \bm{\psi}_i(x_m)\bm{\psi}_j(x_m)^T=\int_{\mathcal{M}}\bm{\psi}_i(x)\bm{\psi}_j(x)d\hat{\nu}_M.
		\end{aligned}
	\end{equation}
	The $ij$-th elements of $A$ and $G$ contain the sample means of $\bm{\psi}_i\big(\bm{F}(x)\big)\bm{\psi}_j(x)$ and $\bm{\psi}_i(x)\bm{\psi}_j(x)$. Thus, when $M$ is finite, (\ref{Galerkin})  is approximated by (\ref{EDMD-app}). By the law of large numbers, the sample mean almost surely converges to the expected value when the number of samples $M$ is sufficiently large. Further, we have the following convergence  with probability 1,
	\begin{equation}\label{cov}
		\lim_{M\rightarrow \infty}A_{ij} = \hat{A}_{ij},\quad \lim_{M\rightarrow \infty}G_{ij} = \hat{G}_{ij}.
	\end{equation}
	The convergence (\ref{cov}) could be read through two viewpoints. On the one hand, the convergence can be interpreted as the almost surely convergence of the empirical measure $\hat{\nu}_M$ to the distribution $\nu$ when there are sufficent data. On the other hand, we can consider the convergence as the result of the Monte-Carlo integration methods with independent identically distributed data. In this case, we have the convergence rate $\mathcal{O}(M^{-1/2})$.
	
	\subsection{Convergence of the finite-dimensional subspace projection}\label{errA2}
	The convergence of EDMD to the Galerkin method has  been discussed   in Section \ref{errA1}.
	In computation,  we use the finite-dimensional observable subspace projection $\mathcal{K}_N$ of Koopman operator. Now we consider  the convergence of the subspace operator $\mathcal{K}_N$ to Koopman operator $\mathcal{K}$. This convergence  was analyzed in  \cite{EDMD2}. Following the notation (\ref{norm_weighetd}),  we denote $L^2(\mathcal{M};p_s)$ norm of a function $f$ by $\|f\|_{0,p_s}$.
	Given certain assumptions, the strong convergence of $\mathcal{K}_NP_N^{\nu}$ to $\mathcal{K}$ is addressed as follows.
	\begin{theorem}\label{s_con}\cite{EDMD2} Assume that Koopman operator $\mathcal{K}: \mathcal{F}\rightarrow\mathcal{F}$ is bounded, and the observables $\psi_1,\psi_2,\cdots,\psi_N$ defining $\mathcal{F}_{\mathcal{D}}$ are selected from a given orthonormal basis of $\mathcal{F}$, i.e., $\{\psi_i\}_{i=1}^{\infty}$ is an orthonormal basis of $\mathcal{F}$.
		Then the sequence of operators $\mathcal{K}_NP_N^{\nu}$ converges strongly to $\mathcal{K}$ as $N\rightarrow \infty$, i.e.,
		\[
		\lim_{N\rightarrow \infty}\int_{\mathcal{M}}|\mathcal{K}_NP_N^{\nu}g-\mathcal{K}g|^2d{\nu}=0
		\]
		for all $g\in \mathcal{F}$.
	\end{theorem}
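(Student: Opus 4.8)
The plan is to recognize that $\mathcal{K}_N P_N^\nu = P_N^\nu \mathcal{K} P_N^\nu$, where $P_N^\nu$ is the $\langle\cdot,\cdot\rangle_{p_s}$-orthogonal projection of $\mathcal{F}=L^2(\mathcal{M};p_s)$ onto $\mathcal{F}_{\mathcal{D}}=\mathrm{span}\{\psi_1,\dots,\psi_N\}$ (with the subspaces taken nested and exhausting the basis as $N\to\infty$), and then to reduce everything to two elementary Hilbert-space facts. The first is that, since $\{\psi_i\}_{i=1}^{\infty}$ is a \emph{complete} orthonormal system in $\mathcal{F}$, the truncated expansions converge: $\|P_N^\nu h - h\|_{0,p_s}\to 0$ as $N\to\infty$ for every $h\in\mathcal{F}$ (Parseval's identity). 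The second is that an orthogonal projection is a contraction, $\|P_N^\nu\|=1$. Both are standard; the hypothesis that $\{\psi_i\}$ be an \emph{orthonormal basis}, rather than merely an orthonormal family, is used precisely to secure the first.

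Granting these, the estimate is a single triangle inequality. Fixing $g\in\mathcal{F}$, I would write
\[
\|\mathcal{K}_N P_N^\nu g-\mathcal{K}g\|_{0,p_s}
\le \|P_N^\nu\mathcal{K}(P_N^\nu g-g)\|_{0,p_s}+\|P_N^\nu(\mathcal{K}g)-\mathcal{K}g\|_{0,p_s}.
\]
The first term is bounded by $\|P_N^\nu\|\,\|\mathcal{K}\|\,\|P_N^\nu g-g\|_{0,p_s}=\|\mathcal{K}\|\,\|P_N^\nu g-g\|_{0,p_s}$, which vanishes by the first fact applied to $h=g$; this is where the boundedness of $\mathcal{K}$ enters. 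The second term is $\|P_N^\nu h-h\|_{0,p_s}$ with $h=\mathcal{K}g\in\mathcal{F}$, which vanishes by the first fact again. Squaring and using $\|f\|_{0,p_s}^2=\int_{\mathcal{M}}|f|^2\,d\nu$ (valid since $\nu$ has density $p_s$) gives exactly the claimed limit $\lim_{N\to\infty}\int_{\mathcal{M}}|\mathcal{K}_N P_N^\nu g-\mathcal{K}g|^2\,d\nu=0$.

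I do not expect a genuine obstacle: the content is entirely in the two standard lemmas above, and the remaining work is bookkeeping — making sure $P_N^\nu$ is the projection in the weighted inner product consistent with $\nu=p_s\,dx$, and recording that the present statement concerns the \emph{idealized} Galerkin operator, which by Section~\ref{errA1} is the $M\to\infty$ limit of the data-driven EDMD operator, so the two error sources compose cleanly. The one conceptually subtle point worth flagging is that the conclusion is strong (pointwise) convergence, not uniform (norm) convergence of operators; the latter should not be expected without compactness of $\mathcal{K}$ or extra structure, and the contraction property $\|P_N^\nu\|=1$ is exactly what keeps the first term of the triangle inequality under control uniformly in $N$.
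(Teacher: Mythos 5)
Your proof is correct. Note that the paper itself offers no proof of Theorem \ref{s_con}; it is quoted from \cite{EDMD2}, and your argument --- writing $\mathcal{K}_N P_N^{\nu}=P_N^{\nu}\mathcal{K}P_N^{\nu}$ and splitting $\|\mathcal{K}_N P_N^{\nu}g-\mathcal{K}g\|_{0,p_s}\le \|\mathcal{K}\|\,\|P_N^{\nu}g-g\|_{0,p_s}+\|P_N^{\nu}\mathcal{K}g-\mathcal{K}g\|_{0,p_s}$, with completeness of $\{\psi_i\}$ and contractivity of the orthogonal projection --- is precisely the standard proof given in that reference, so there is nothing to add beyond the bookkeeping you already flag (the weighted inner product defining $P_N^{\nu}$ and the identification of $\mathcal{K}_N$ with the idealized Galerkin operator).
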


	We note that $P_N^{\nu}g$ is  the $L^2(\mathcal{M};p_s)$-projection of a function $g\in \mathcal{F}$ onto $\mathcal{F}_{\mathcal{D}}$. Besides, the weak convergence of the spectra of Koopman operator is given in the following theorem.
	\begin{theorem}\label{w_con}\cite{EDMD2}
		If $\mu_N$ is a sequence of eigenvalues of $\mathcal{K}_N$ with the associated normalized eigenfunctions $\varphi_N\in \mathcal{F}_{\mathcal{D}}$, i.e., $\|\varphi_N\|_{0,p_s}=1$, then there exists a subsequence $(\mu_{N_i},\varphi_{N_i})$ such that
		\[
		\lim_{i\rightarrow \infty} \mu_{N_i}=\mu,\quad \varphi_{N_i}\xrightarrow{w}\varphi,
		\]
		where $(\mu,\varphi)$ is the eigenpair of Koopman operator such that $\mathcal{K}\varphi=\mu\varphi$.
	\end{theorem}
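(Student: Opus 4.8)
The plan is a weak-compactness together with spectral-perturbation argument resting on Theorem~\ref{s_con} and the boundedness of $\mathcal{K}$ on $L^2(\mathcal{M};p_s)$. First I would extract convergent subsequences: since $\|\varphi_N\|_{0,p_s}=1$, the family $\{\varphi_N\}$ is bounded in the Hilbert space $L^2(\mathcal{M};p_s)$, so by weak sequential compactness there is a subsequence with $\varphi_{N_i}\xrightarrow{w}\varphi$ for some $\varphi\in L^2(\mathcal{M};p_s)$. The operator $\mathcal{K}_N$ is the compression $\mathcal{K}_N=P_N^{\nu}\mathcal{K}P_N^{\nu}$ of the bounded operator $\mathcal{K}$, hence $|\mu_N|\le\|\mathcal{K}_N\|\le\|\mathcal{K}\|$; thus $\{\mu_N\}$ is bounded in $\mathbb{C}$, and passing to a further subsequence (not relabeled) we may assume $\mu_{N_i}\to\mu$.

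Next I would pass to the limit in the eigenrelation $\mathcal{K}_{N_i}\varphi_{N_i}=\mu_{N_i}\varphi_{N_i}$. Because $\varphi_{N_i}\in\mathcal{F}_{\mathcal{D}}$ we have $P_{N_i}^{\nu}\varphi_{N_i}=\varphi_{N_i}$, so $P_{N_i}^{\nu}\mathcal{K}\varphi_{N_i}=\mu_{N_i}\varphi_{N_i}$. Fixing any $g\in\mathcal{F}$ and using that $P^{\nu}_{N_i}$ is the orthogonal projection for $\langle\cdot,\cdot\rangle_{p_s}$ gives
\begin{equation*}
\langle\mathcal{K}\varphi_{N_i},P_{N_i}^{\nu}g\rangle_{p_s}=\mu_{N_i}\langle\varphi_{N_i},g\rangle_{p_s}.
\end{equation*}
The right-hand side converges to $\mu\langle\varphi,g\rangle_{p_s}$. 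On the left, boundedness of $\mathcal{K}$ gives $\mathcal{K}\varphi_{N_i}\xrightarrow{w}\mathcal{K}\varphi$, so $\langle\mathcal{K}\varphi_{N_i},g\rangle_{p_s}\to\langle\mathcal{K}\varphi,g\rangle_{p_s}$; since the $\psi_j$ are drawn from an orthonormal basis of $\mathcal{F}$, $P_{N_i}^{\nu}g\to g$ in norm, and because $\|\mathcal{K}\varphi_{N_i}\|_{0,p_s}\le\|\mathcal{K}\|$ the cross term $\langle\mathcal{K}\varphi_{N_i},P_{N_i}^{\nu}g-g\rangle_{p_s}$ vanishes. Letting $i\to\infty$ yields $\langle\mathcal{K}\varphi,g\rangle_{p_s}=\mu\langle\varphi,g\rangle_{p_s}$ for every $g\in\mathcal{F}$, that is, $\mathcal{K}\varphi=\mu\varphi$.

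The main obstacle --- the only delicate point --- is showing that the weak limit $\varphi$ is nonzero, since weak convergence does not preserve norms and a priori $\varphi$ could vanish, making the eigenrelation vacuous. To close this gap I would invoke the structural facts recorded earlier: the spectrum of $\mathcal{L}$ is discrete on $L^2(\mathcal{M};p_s)$ and the semigroup $\mathcal{K}^t=e^{t\mathcal{L}}$ is analytic, so the resolvent of $\mathcal{L}$ is compact and $\mathcal{K}=\mathcal{K}^{\Delta t}$ is compact, while every eigenvalue $\mu=e^{\lambda\Delta t}$ is strictly positive because $\lambda\in\mathbb{R}$. Compactness upgrades $\varphi_{N_i}\xrightarrow{w}\varphi$ to the norm convergence $\mathcal{K}\varphi_{N_i}\to\mathcal{K}\varphi$; combining this with $P_{N_i}^{\nu}\mathcal{K}\varphi_{N_i}=\mu_{N_i}\varphi_{N_i}$ and $(I-P_{N_i}^{\nu})\to0$ strongly (with norm $\le1$) gives $\mu_{N_i}\varphi_{N_i}\to\mathcal{K}\varphi$ in norm, hence $\varphi_{N_i}\to\mu^{-1}\mathcal{K}\varphi$ in norm once $\mu\neq0$. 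Then $\|\varphi\|_{0,p_s}=\lim_i\|\varphi_{N_i}\|_{0,p_s}=1$, so $\varphi$ is a genuine eigenfunction of $\mathcal{K}$. Without compactness, or in the degenerate case $\mu=0$, one cannot in general conclude $\varphi\neq0$, which is why the positivity of the generator's spectral data is essential here.
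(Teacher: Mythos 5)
Your first two paragraphs are sound and essentially reproduce the argument behind the cited result (the paper itself does not prove this theorem; it is quoted from \cite{EDMD2}): weak sequential compactness of the unit ball gives $\varphi_{N_i}\xrightarrow{w}\varphi$, the bound $|\mu_N|\le\|\mathcal{K}\|$ gives $\mu_{N_i}\to\mu$, and passing to the limit in $\langle\mathcal{K}\varphi_{N_i},P_{N_i}^{\nu}g\rangle_{p_s}=\mu_{N_i}\langle\varphi_{N_i},g\rangle_{p_s}$, using $P_{N_i}^{\nu}g\to g$ in norm and $\mathcal{K}\varphi_{N_i}\xrightarrow{w}\mathcal{K}\varphi$, yields $\mathcal{K}\varphi=\mu\varphi$. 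That is exactly the route of Korda and Mezi\'c, whose theorem, however, carries the proviso ``if $\varphi\neq 0$'' (the weak limit may vanish, in which case the relation is vacuous); the statement as transcribed in this paper silently drops that caveat.

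The genuine gap is in your third paragraph, where you try to remove the caveat. You argue that $\mu=e^{\lambda\Delta t}>0$ ``because $\lambda\in\mathbb{R}$,'' but this conflates eigenvalues of $\mathcal{K}$ with the limit $\mu$ of eigenvalues of the compressions $\mathcal{K}_N=P_N^{\nu}\mathcal{K}P_N^{\nu}$, which is precisely what is not yet known to be an eigenvalue of $\mathcal{K}$. Indeed, under the very compactness you invoke (which itself is an extra hypothesis: Theorem \ref{s_con} assumes only that $\mathcal{K}$ is bounded, and compactness must be argued from the paper's standing assumptions of a self-adjoint generator with discrete spectrum tending to $-\infty$), the spectrum of $\mathcal{K}$ accumulates at $0$, so compression eigenvalues can and do tend to $0$: take $\varphi_N$ close to the $N$-th exact eigenfunction, then $\mu_N\to 0$ and $\varphi_N\xrightarrow{w}0$, and no nontrivial eigenpair is produced. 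So your norm-convergence upgrade $\varphi_{N_i}\to\mu^{-1}\mathcal{K}\varphi$, which is correct \emph{when} $\mu\neq 0$, cannot be activated in general, and the conclusion $\|\varphi\|_{0,p_s}=1$ does not follow. The honest statement of the theorem either keeps the hypothesis $\varphi\neq 0$ (equivalently, restricts to subsequences with $\mu_{N_i}$ bounded away from zero, for which your compactness argument does work) or must impose additional spectral separation assumptions; it cannot be proved as stated for an arbitrary sequence of compression eigenpairs.
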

	\subsection{Convergence of the  stochastic Koopman operator}\label{errA3}
	Now, we consider the stochastic system (\ref{model}) in Section \ref{pre}, and rewrite the model as
	\begin{equation}\label{model1}
		\frac{dX_t}{dt}=F\big(\eta_t,X_t),
	\end{equation}
	where $\eta_t$ is defined in form as the noise process $\eta_t:=\frac{dW_t}{dt}$.
	Given the  eigenpairs $(\lambda,\varphi)$ of the generator $\mathcal{L}$, we have the following equation,
	\begin{equation}\label{dvarphi1}
		\frac{d\varphi}{dt}=\lambda\varphi+\tilde{F}\cdot \nabla\varphi,
	\end{equation}
	where $\tilde{F}=F-\mathbb{E}(F)$. Similarly, when the model is written in the form of (\ref{model}), we obtain the following equation \cite{Koopman_RDS},
	\begin{equation}\label{dvarphi2}
		d\varphi=\lambda\varphi dt+\nabla\varphi \sigma dW_t.
	\end{equation}
	Thus, combining (\ref{dvarphi1}) and (\ref{dvarphi2}), we notice that $\tilde{F}=F-\mathbb{E}(F)=\sigma \frac{dW_t}{dt}=\sigma \eta$.
	From (\ref{Koopman&genrator}), for any observable $g$, we have $\mathcal{K}^tg(x)=\mathbb{E}[g\circ \Phi^t(x)]=e^{t\mathcal{L}}g(x)$ and $\mathcal{L}g(x)=\mathbb{E}(F)\cdot\nabla g$.  Thus a deviation arises when we use  path data pairs instead of  the expectation data pairs in EDMD  for the  approximation of stochastic Koopman operator,
	\begin{equation}\label{err3}
		\begin{aligned}
			g\circ\Phi^{\tau}-\mathcal{K}^{\tau}g&=g\circ \Phi^{\tau}-e^{\tau\mathcal{L}}g\\
			&=e^{\tau\mathcal{L}_d}g-e^{\tau\mathcal{L}}g\\
			&=(e^{\tau\mathcal{L}_d}-e^{\tau\mathcal{L}})g\\
			&\approx\mathcal{O}(\tau)\cdot(\mathcal{L}_d-\mathcal{L})g\\
			&=\mathcal{O}(\tau)\cdot\big(F-\mathbb{E}(F)\big)\cdot \nabla g\\
			&=\mathcal{O}(\tau)\cdot\sigma\eta\cdot\nabla g,
		\end{aligned}
	\end{equation}
	where $\mathcal{L}_d$ denotes the ``deterministic generator'' of (\ref{model1}), and satisfies $\mathcal{L}_dg(x)=F\cdot \nabla g$. For the approximate equality, we have used  the first-order differentiation approximation of semigroups $e^{\tau \mathcal{L}_d}$ and $e^{\tau \mathcal{L}}$ around $\tau=0$ in accordance with the term-by-term differential properties in Prop. \ref{prop_koopman}. The convergence of power series in $\color{black}{\cite{semigroup1}}$ ensures that the linear approximation of Koopman operator in (\ref{err3}) converges to zero. Here we omit the proof for brevity. From (\ref{err3}), we note  that the order of error $\mathcal{O}(\tau)$ is achieved for  the approximation of stochastic Koopman operator with  EDMD using  the snapshots pairs from stochastic systems.  The time $\tau$ is small enough so that the the first-order approximation of stochastic Koopman operator is accurate and tractable. This can be realized  in the numerical simulation when the snapshot pairs are the state evolution with small temporal intervals.
	
	\begin{theorem}
		If the number of snapshots $M$ and the dimension of finite-dimensional subspace projection $N$ go to infinity, i.e., $M,\,N\rightarrow\infty$, then  the following convergence holds,
		\[A_{ij}\xrightarrow{a.s.}\hat{A}_{ij},\quad G_{ij}\xrightarrow{a.s.}\hat{G}_{ij},\quad \mathcal{K}_{M,N}\xrightarrow{a.s.}\mathcal{K}_N\]
		and \[\mathcal{K}_N\xrightarrow{strongly}\mathcal{K} \quad \text{ in } L_2(\nu).
		\]
		Thus, the approximation $p_N(x,t)$ of the probability density function (\ref{se1}) weakly converges to $p(x,t)$ in $L^2(\mathcal{M};p_s^{-1})$, i.e.,
		\begin{equation}
			p_N(x,t)\xrightarrow{w} p(x,t), \quad\quad\forall (x,t)\in \mathcal{M}\times \mathbb{R}^+,
		\end{equation}
		where $p_N(x,t)=\sum_{i=1}^{N}\tilde{c}_i(0)e^{\lambda_{N_i} t}\varphi_{N_i}(x)p_s(x)$  and $(\lambda_{N_i},\varphi_{N_i})$ are the eigenpairs  in Theorem \ref{w_con}.
	\end{theorem}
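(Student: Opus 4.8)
The plan is to establish the displayed convergences in the order written, treating the two limits $M\to\infty$ and $N\to\infty$ separately and then inserting the resulting spectral convergence into the spectral expansion \eqref{se1}. For a fixed dictionary, the matrices $A$ and $G$ in \eqref{EDMD-app} are sample means over the i.i.d.\ draws $\{x_m\}$ from $\nu$ of the functions $\bm\psi_i(\Phi^{\Delta t}(x))\bm\psi_j(x)$ and $\bm\psi_i(x)\bm\psi_j(x)$, which are bounded on $\mathcal M$; the strong law of large numbers then gives $A_{ij}\xrightarrow{a.s.}\hat A_{ij}$ and $G_{ij}\xrightarrow{a.s.}\hat G_{ij}$, which is exactly \eqref{cov}, and under the orthonormality hypothesis of Theorem \ref{s_con} one has $\hat G=I$, so $\hat G$ is invertible. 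Since $(A,G)\mapsto AG^{\dagger}$ is continuous near $(\hat A,\hat G)$, it follows that $\mathcal K_{M,N}=AG^{\dagger}\xrightarrow{a.s.}\hat A\hat G^{-1}=\mathcal K_N$ entrywise; eigenvalues depend continuously on matrix entries, and, assuming the relevant eigenvalues of $\mathcal K_N$ are simple, so do the normalized eigenvectors, so the data-driven eigenpairs converge a.s.\ to the Galerkin eigenpairs $(\mu_{N_i},\varphi_{N_i})$ and, by \eqref{eig_relation}, $\lambda_i^{M,N}=(\Delta t)^{-1}\log\mu_i^{M,N}\to\lambda_{N_i}$. One more application of the SLLN shows $\tilde c_i(0)=\frac1{M+1}\sum_j p_0(x_j)\varphi_i(x_j)/p_s(x_j)\xrightarrow{a.s.}\langle p_0,\varphi_i p_s\rangle_{p_s^{-1}}$.

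For the limit $N\to\infty$ I would invoke the two imported results. Theorem \ref{s_con} gives $\mathcal K_NP_N^{\nu}\to\mathcal K$ strongly in $L^2(\mathcal M;p_s)=L^2(\nu)$, and Theorem \ref{w_con} supplies, for each target eigenpair, a subsequence along which $\mu_{N_i}\to\mu$ and $\varphi_{N_i}\xrightarrow{w}\varphi$ with $\mathcal K\varphi=\mu\varphi$; by \eqref{eig_relation} this transfers to $\lambda_{N_i}\to\lambda$ with $\mathcal L\varphi=\lambda\varphi$, hence $e^{\lambda_{N_i}t}\to e^{\lambda t}$ for every $t\ge 0$. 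Because $p_0/p_s\in L^2(\mathcal M;p_s)$, weak convergence of $\varphi_{N_i}$ yields $\langle p_0,\varphi_{N_i}p_s\rangle_{p_s^{-1}}=\langle p_0/p_s,\varphi_{N_i}\rangle_{p_s}\to\langle p_0/p_s,\varphi\rangle_{p_s}=c_i(0)$, so the $M$-limits of $\tilde c_i(0)$ from the first step converge to the exact coefficients $c_i(0)$ as $N\to\infty$.

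It remains to assemble the weak convergence of the densities. Fixing $t>0$ and a test function $h\in L^2(\mathcal M;p_s^{-1})$, I would expand $h=\sum_j h_j\varphi_j p_s$ in the orthonormal basis $\{\varphi_j p_s\}$ of $L^2(\mathcal M;p_s^{-1})$ established in Section \ref{DPDD}, so that \eqref{se1} gives $\langle p(\cdot,t),h\rangle_{p_s^{-1}}=\sum_{j} c_j(0)\,e^{\lambda_j t}\,\overline{h_j}$ and $\langle p_N(\cdot,t),h\rangle_{p_s^{-1}}=\sum_{i=1}^N \tilde c_i(0)\,e^{\lambda_{N_i}t}\,\overline{h_i^{(N)}}$ with $\overline{h_i^{(N)}}:=\langle\varphi_{N_i},h/p_s\rangle_{p_s}$. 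The first series is absolutely convergent, since $\lambda_j\to-\infty$ forces $e^{\lambda_j t}\le 1$ while $(c_j(0)),(h_j)\in\ell^2$. I would then split $\langle p-p_N,h\rangle_{p_s^{-1}}$ into the tail $\sum_{j>N}c_j(0)e^{\lambda_j t}\overline{h_j}$, which vanishes as $N\to\infty$, and the error over the first $N$ indices, which tends to $0$ on letting $M\to\infty$ first (by the earlier steps $\tilde c_i(0)\to c_i(0)$, $e^{\lambda_{N_i}t}\to e^{\lambda_i t}$, $\overline{h_i^{(N)}}\to\overline{h_i}$) and then $N\to\infty$ along the Theorem \ref{w_con} subsequence, the passage to the limit under the sum being justified by a domination $|\tilde c_i(0)e^{\lambda_{N_i}t}\overline{h_i^{(N)}}|\le C\,e^{\lambda_{N_i}t}$ together with the summability furnished by $\lambda_{N_i}\to-\infty$. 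This yields $\langle p_N(\cdot,t),h\rangle_{p_s^{-1}}\to\langle p(\cdot,t),h\rangle_{p_s^{-1}}$ for every $h$, i.e.\ $p_N(\cdot,t)\xrightarrow{w}p(\cdot,t)$ in $L^2(\mathcal M;p_s^{-1})$.

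The main obstacle, as I see it, is in this last step. Theorem \ref{w_con} delivers, eigenpair by eigenpair, only a convergent \emph{subsequence} of Galerkin eigenpairs, so to identify the $N$-term sum $p_N$ with an honest truncation of \eqref{se1} one must either postulate an index matching --- the first $N$ eigenvalues of $\mathcal K_N$ tracking the $N$ slowest of $\mathcal L$ --- or pass to a diagonal subsequence; and interchanging $\lim_N$ with the growing sum $\sum_{i=1}^N$ requires a decay bound on $e^{\lambda_{N_i}t}$ uniform in $N$, which does not follow from Theorem \ref{w_con} alone and must be supplied separately (for instance through a min--max estimate for the Galerkin eigenvalues). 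The remaining pieces --- SLLN, continuity of matrix inversion and of simple eigenpairs, weak convergence tested against a fixed function --- are routine.
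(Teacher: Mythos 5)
Your proposal follows essentially the same route as the paper: the error $p-p_N$ is split into a Monte--Carlo coefficient error, an eigenpair-approximation error handled through Theorem \ref{w_con} (via $\lambda_{N_i}\to\lambda_i$ and $\varphi_{N_i}\xrightarrow{w}\varphi_i$), and a spectral tail controlled by the decay of $e^{\lambda_i t}$, which is exactly the paper's three-term decomposition; your test-function formulation is just the weak-convergence phrasing of the same split, and the preliminary a.s.\ statements are handled in the paper by Section \ref{errA1} rather than inside the proof. The obstacles you flag --- the merely subsequential, eigenpair-by-eigenpair convergence supplied by Theorem \ref{w_con} and the absence of a bound uniform in $N$ needed to pass the limit through the growing sums --- are genuine, but the paper's own proof passes over them silently as well, so your attempt is at least as careful as the original.
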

	\begin{proof}Combining the expression (\ref{se1}) of dynamic probability density decomposition and the approximation $p_N(x,t)$, we have
		\begin{equation}
			\begin{aligned}
				p(x,t)-p_N(x,t)&=\sum_{i=1}^{\infty}c_i(0)e^{\lambda_{i} t}\varphi_{i}(x)p_s(x)-\sum_{i=1}^N \tilde{c}_i(0)e^{\lambda_{N_i} t}\varphi_{N_i}(x)p_s(x)\\
				&= \sum_{i=1}^N e^{\lambda_{N_i}t}\varphi_{N_i}(x)p_s(x)\big(c_i(0)-\tilde{c}_i(0)\big)\\
				&+\sum_{i=1}^N\big(e^{\lambda_it}\varphi_i(x)-e^{\lambda_{N_i}t}\varphi_{N_i}(x)\big)p_s(x)c_i(0)\\
				&+\sum_{i=N+1}^{\infty}e^{\lambda_it}\varphi_i(x)p_s(x)c_i(0)\\
				&\triangleq \uppercase\expandafter{\romannumeral1}+\uppercase\expandafter{\romannumeral2}+\uppercase\expandafter{\romannumeral3}.
			\end{aligned}
		\end{equation}
		Now, we analyze  the three terms  and show that all of them weakly converges to zero under the appropriate conditions. Firstly, $\tilde{c}_i(0)$ is a Monte-Carlo integration approximation of coefficients $c_i(0)$,  a strong convergence is ensured when $M\rightarrow \infty$. To address the second term $II$, we consider
		\[
		e^{\lambda_{i}t}\varphi_{i}(x)-e^{\lambda_{N_i}t}\varphi_{N_i}(x)= e^{\lambda_{i}t}\big(\varphi_{i}(x)-\varphi_{N_i}(x)\big)+\big(e^{\lambda_{i}t}-e^{\lambda_{N_i}t}\big)\varphi_{N_i}(x).
		\]
		By Theorem \ref{w_con}, we have the convergence of eigenvalues $\lambda_{N_i}\rightarrow\lambda_{i}$ and the weak convergence of eigenfunctions $\varphi_{N_i}\xrightarrow{w}\varphi_{i}$. Therefore, the weak convergence of second term  is also achieved.
		
		As for the third term $III$, we recall that the eigenvalues of the generator $\mathcal{L}$ is non-positive and descending sorted $\lambda_0=0>\lambda_1>\lambda_2>\cdots$ with exponential decay, and as the eigenfunctions of FK operator, $\varphi_{i}(x)p_s(x)$ can be considered as the smooth and bounded functions. In consequence, as dimensionality of observable space $N$ goes towards infinity, $e^{\lambda_{i}t}$ will decay  rapidly to zero, so we conclude that the third term  converges to zero. Therefore, the proof is completed.
	\end{proof}
	
	From the theorem above, we conclude that the spectral expansion of the probability density function will accurately approximate the truth density as the number $M$ of data snapshots and the number $N$ of observables  are larger enough.  The probability density can be used for the prediction of statistical moments, and provide the support to make decisions.

	\section{Numerical results}\label{num}
	In this section, some  numerical examples are given to illustrate the  efficacy of the DPDD method and make the comparison for the two nonparametric forecast methods: Koopman operator forecast (DPDD) and diffusion forecast (DF).    Subsection \ref{initial selection} is to show how the initial density function impacts on the approximation of DPDD approach. Subsection \ref{1dOU} demonstrates that if the underlying system is a gradient flow with isotropic diffusion, then the DPDD method  agrees with the DF method.  Both of them lead to  an analytic expression for the coefficients $c_i(t)$. Subsection \ref{2d_ex} and Subsection \ref{Lorenz63} show the comparison between DPDD and DF  using a  two-dimensional turbulence system and a noisy Lorenz-63 system,  respectively. In  Subsection \ref{realistic}, DPDD is used to forecast the ocean temperature, which is a  realistic problem.
	
	\subsection{The importance  of the initial density condition}\label{initial selection}
	As we have discussed in Section \ref{dis_pdf}, the importance sampling  is used  to avoid sampling from the initial distribution. While the importance sampling method is more accurate when the alternative distribution $p_s(x)$ is  proportional to  $p_0(x)\varphi_i(x)$. In this subsection, we will demonstrate that how the initial density $p_0(x)$ affects the approximation of  DPDD. For numerical simulation, we consider the following dynamical system,
	\begin{equation}\label{ex1}
		dX_t=-4X_t(X_t-1)(X_t+\frac{5}{4})dt+\sigma dW_t,
	\end{equation}
	where $\sigma$ is a constant.  The corresponding deterministic dynamical system ($\sigma=0$) has two stable equilibrium points $x=1,\, -\frac{5}{4}$ and one unstable point $x=0$. The stochastic system($\sigma=\sqrt{2}$)  also has two potential pits, although the trajectories deviate from the stable equilibrium points under the random perturbations. The invariant distribution $p_s(x)$ of the  stochastic system,
	\[
	p_s(x)=Z\exp\Big\{\frac{2}{\sigma^2}\int_{0}^{x}\big(-4u(u-1)(u+\frac{5}{4})\big)du\Big\}=Z\exp\Big\{-\frac{2}{\sigma^2}(x^4+\frac{1}{3}x^3-\frac{5}{2}x^2)\Big\},
	\]
	where $Z=\int_{\mathcal{M}}\exp\big\{\frac{2}{\sigma^2}\int_{0}^{x}\big(-4u(u-1)(u+\frac{5}{4})\big)du\big\}dx$ is the normalization constant. Figure \ref{p_s} shows the stationary density, from which we can see that there are two  equilibrium points with larger density and one unstable point with smaller density. The peaks of density function indicate that the stochastic state will trend to the stable points with large prabability, while the valley shows that the state will escape from  the unstable points.
	\begin{figure}[htbp]
		\centering
		\includegraphics[width=4.5in]{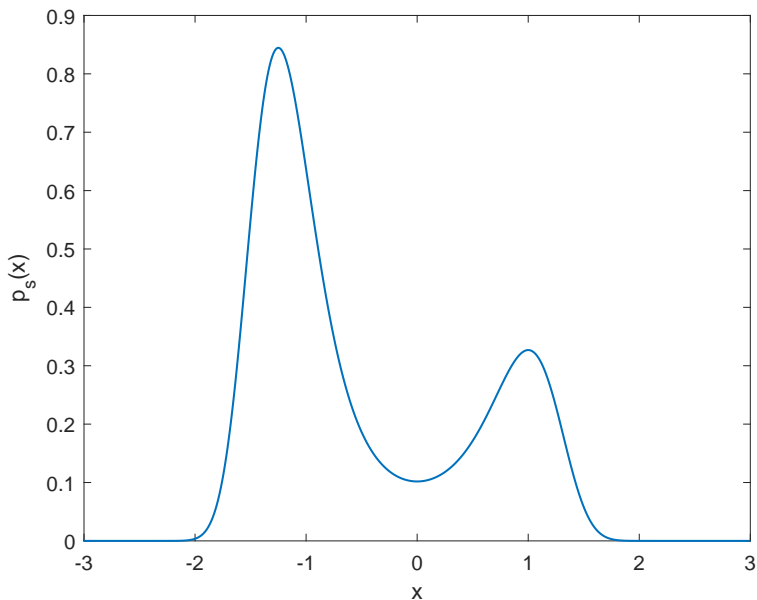}
		\caption{\textit{Stationary density function $p_s(x)$}}\label{p_s}
	\end{figure}
	
	\par We choose the  polynomial functions $\{x^n\}_{n=0}^{5}$  as the observation functions, sample $M=10000$ points from stationary density function $p_s(x)$, and utilize the snapshot data simulated by Euler-Maruyama scheme   to compute the eigenpair $(\lambda_i, \varphi_i (x))$ of Koopman operator $\mathcal{K}=e^{t\mathcal{L}}$, where $\mathcal{L}$ is the infinitesimal generator of the stochastic system (\ref{ex1}).  Figure \ref{sample} shows the histogram of the samples, and Figure \ref{samplepdf} depicts  the empirical probability density function,  which is acquired by a kernel density estimation method,  a built-in algorithm $\textit{ksdensity.m}$ in Matlab. The true stationary density is used as reference.
	\begin{figure}[htbp]
		\begin{minipage}[t]{0.5\linewidth}
			\centering
			\includegraphics[width=2.8in, height=2.3in]{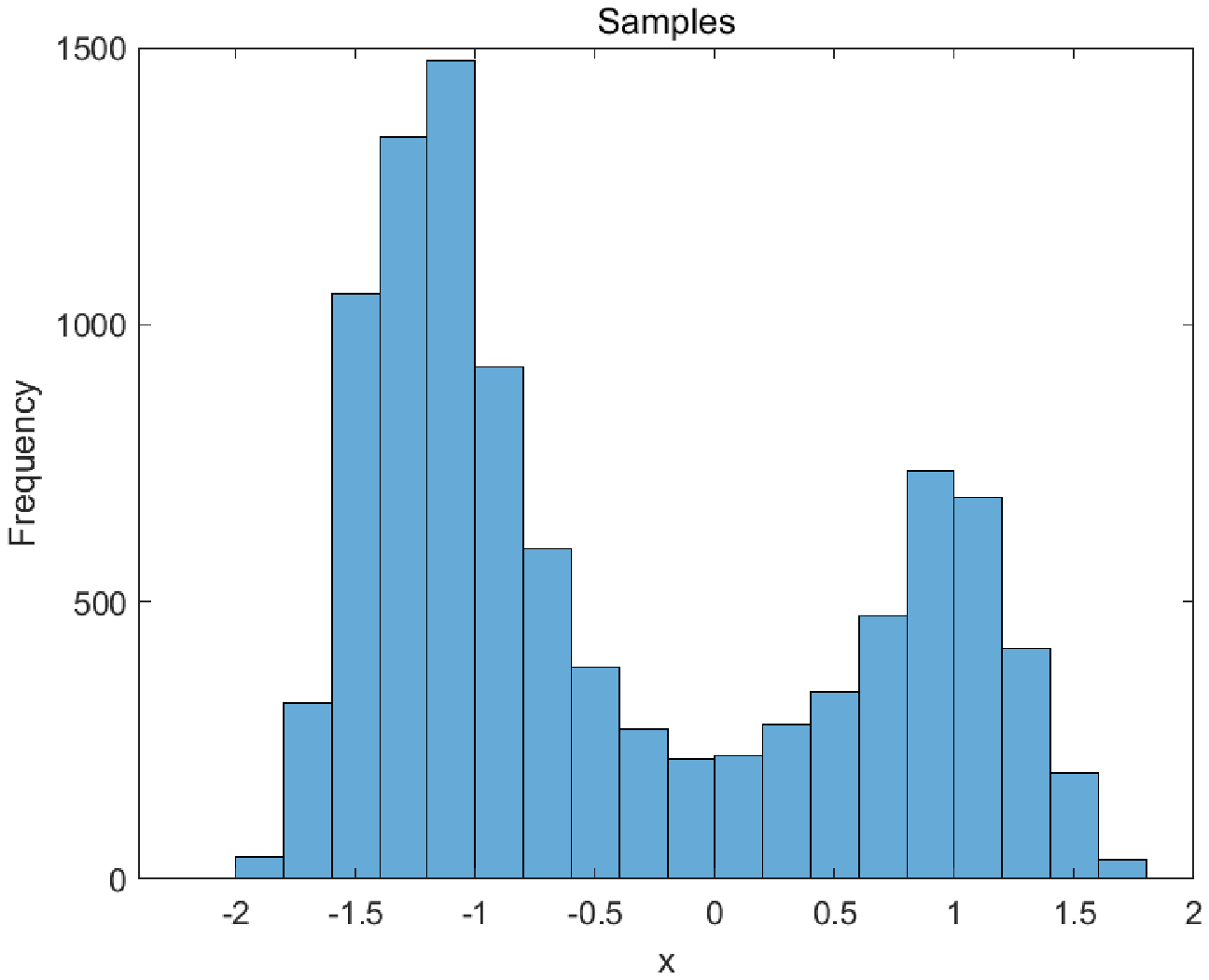}
			\caption{\textit{Samples from $p_s(x)$}}\label{sample}
		\end{minipage}
		\begin{minipage}[t]{0.5\linewidth}
			\centering
			\includegraphics[width=2.8in, height=2.3in]{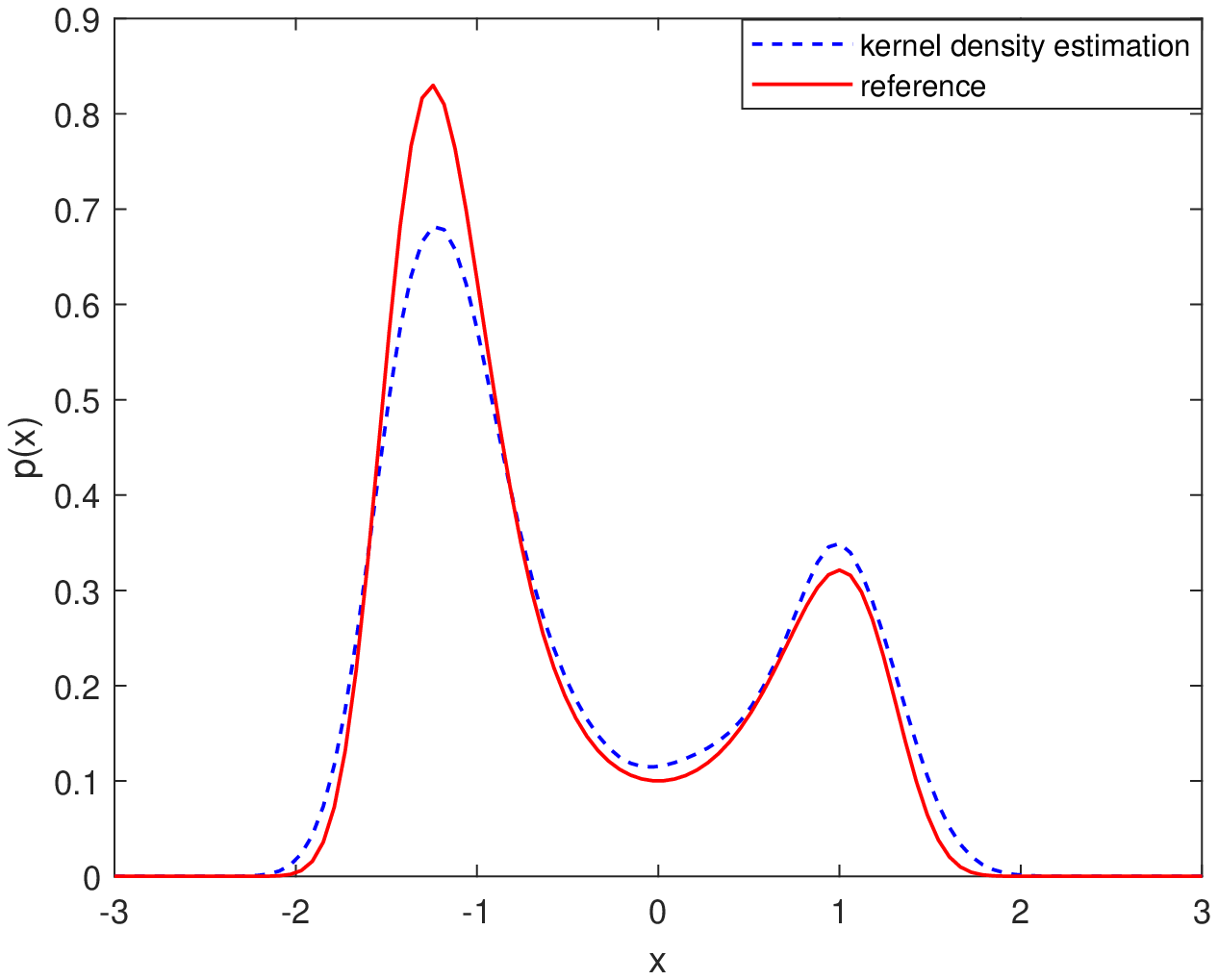}
			\caption{Empirical pdf of samples}\label{samplepdf}
		\end{minipage}
	\end{figure}
	
	Then we compute  the probability density function as (\ref{pN}). One can forecast the density at any time. In this example, we directly solve the Fokker-Planck equation of dynamical system (\ref{ex1}) as the reference solution. Firstly, let the initial density function $p_0(x)$ be exactly the stationary density. The approximate proability density function by DPDD and the reference solution are depicted  for six times  in Figure \ref{sol_close}, and the relative error is shown in Figure \ref{err_close}. From the above figures, we notice that the DPDD probability density  tends  to close to  the reference solution, and the relative error deceases as the time advances. The DPDD solution quickly converges to the truth solution because the initial density is chosen to be the  stationary density.
	\begin{figure}[htbp]
		\centering
		\includegraphics[width=5in]{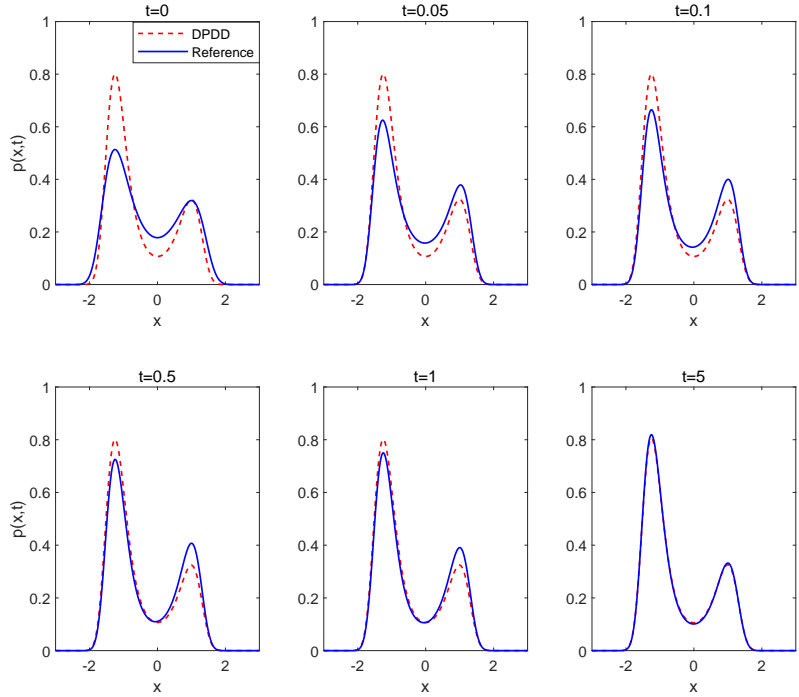}
		\caption{\textit{The approximate pdf at different times. }}\label{sol_close}
	\end{figure}
	
	\begin{figure}[htbp]
		\centering
		\includegraphics[width=5in]{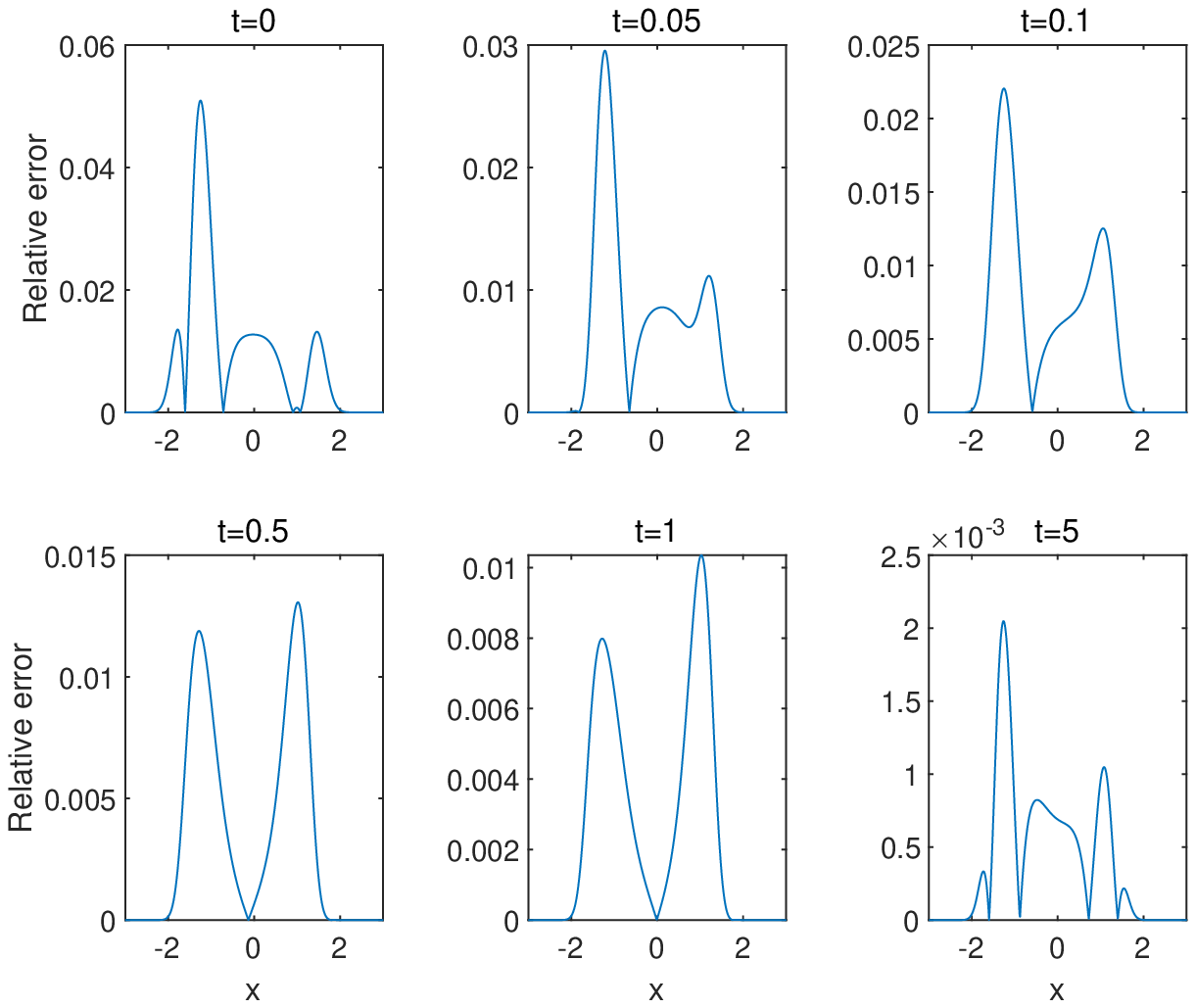}
		\caption{\textit{Relative error at different times. }}\label{err_close}
	\end{figure}
	
	\begin{figure}[htbp]
		\centering
		\includegraphics[width=5in]{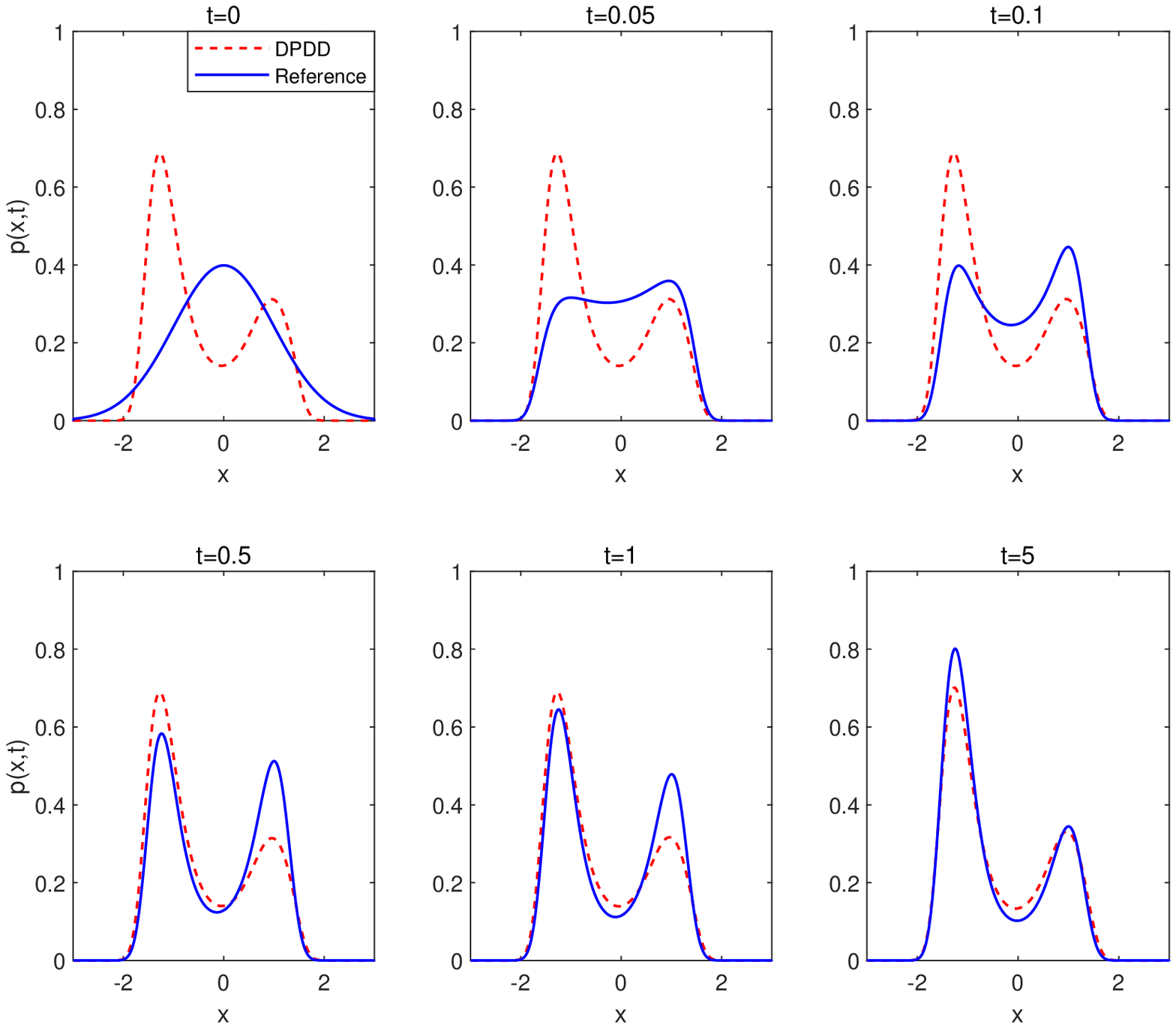}
		\caption{\textit{The approximate pdf at different times. }}\label{sol_diff}
	\end{figure}
	
	\begin{figure}[htbp]
		\centering
		\includegraphics[width=5in]{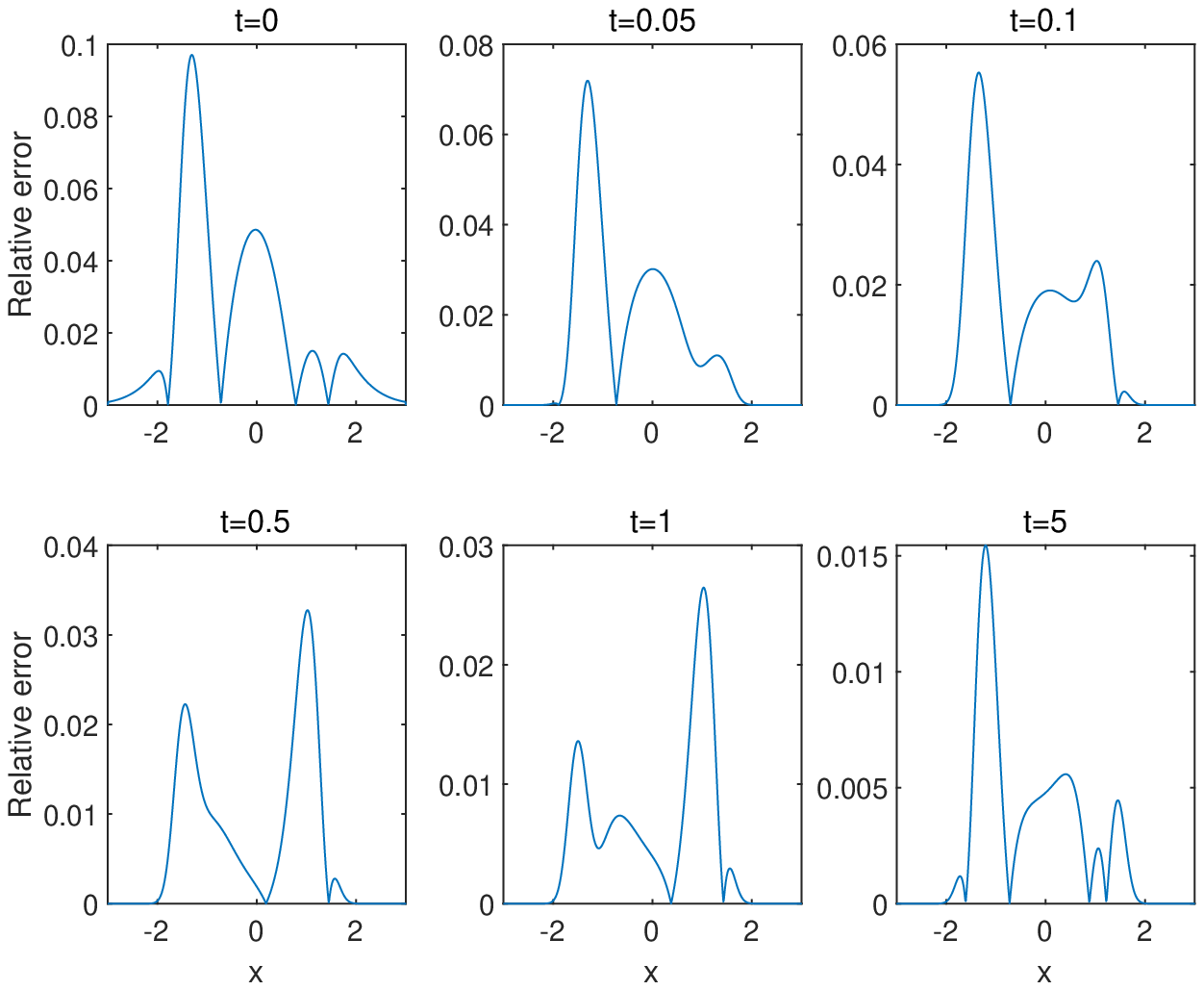}
		\caption{\textit{Relative error at different times. }}\label{err_diff}
	\end{figure}
	Another initial distribution $x_0 \sim \mathcal{N}(0,1)$ is selected for further comparison. The approximation solution and the relative error are showed in Figure \ref{sol_diff} and Figure \ref{err_diff},  respectively. In this case, the initial condition is very  different from the stationary density, so the approximation is less accurate than the previous case, even though the estimation still becomes accurate  as the time goes. Besides,  Figure \ref{err_close} and Figure \ref{err_diff}  show  that the approximation  error in the both cases  is larger at the peak and valley of the probability density function. This phenomena  often occurs  for multimodal modeling. The example confirms that if the initial density does not satisfy the proportional relation  in the importance sampling  approach, the  variance of integration estimation may be large and lead to a significant  DPDD approximation error.  Therefore,  the  initial density  impacts on the forecast accuracy.
	
	\subsection{Ornstein  Uhlenbeck process}\label{1dOU}
	As we clarified in the previous Remark \ref{particular}, the diffusion forecast (DF) method is consistent with our operator-theoretic approach DPDD when the underlying system is a gradient flow with isotropic diffusion. Therefore, in order to numerically  illustrate the identity,   we consider 1-dimensional OU process,
	\begin{equation}\label{OUmodel}
		dX_t=-\lambda X_tdt+\beta dW_t, \quad X_0=x_0.
	\end{equation}
	Take the parameter $\lambda=1$, $\beta=\sqrt{2}$, thus the analytic expression of
	probability density function is given by 
	\[
	p(x,t)=\frac{1}{\sqrt{2\pi v^2(t)}}\exp\Big(\frac{-\big(x-m(t)\big)^2}{2v^2(t)}\Big)
	\]
	with mean $m(t)=x_0 e^{-t}$ and variance $v^2(t)=1-e^{-2t}$, and the stationary states obey the standard normal distribution $
	\mathcal{N}(0,1)$, i.e., $p_s(x)=\frac{1}{\sqrt{2\pi}}e^{-\frac{x^2}{2}}$.
	\par  Moreover, the (infinitesimal) generator  $\mathcal{L}=-x\cdot\nabla+\Delta$. Then the eigenvalues of generator of  the OU process are the nonpositive integers $\lambda_{k}=-k, k=0,1,2,\cdots$, and  the corresponding eigenfunctions are the normalized  Hermite polynomials
	\begin{equation}\label{nor_Hermite}
		\varphi_{k}(x)=\frac{1}{\sqrt{k!}}H_k(x), \quad \text{ with } H_k(x)=(-1)^ke^{\frac{x^2}{2}}\frac{d^k}{dx^k}\big(e^{-\frac{x^2}{2}}\big),
	\end{equation}
	which form an orthonormal basis in $L^2(\mathbb{R};p_s)$. In particular, the first three eigenfunctions are $ \varphi_0=1,\, \varphi_1(x)=x,\,\varphi_2(x)=\frac{1}{\sqrt{2}}(x^2-1)$.
	\par In this example, we simulate the OU process along a long time by Euler-Maruyama scheme and take $10001$ states after a long-time enough evolution as the stationary samples. For the DPDD approach, we choose three monomial functions $\{\psi_n(x)=x^n\}_{n=0}^2$ as observable functions. Thus the data matrices in (\ref{sps}) are two $3\times 10000$ matrices, satisfying the equation $\Psi(Y)=\mathcal{K}\Psi(X)$. For the diffusion forecast method, $10000$ stationary samples are utilized, and three eigenfunctions are estimated by the diffusion map with variable bandwidth kernel.
	\begin{figure}[htbp]
		\centering
		\includegraphics[width=5.5in]{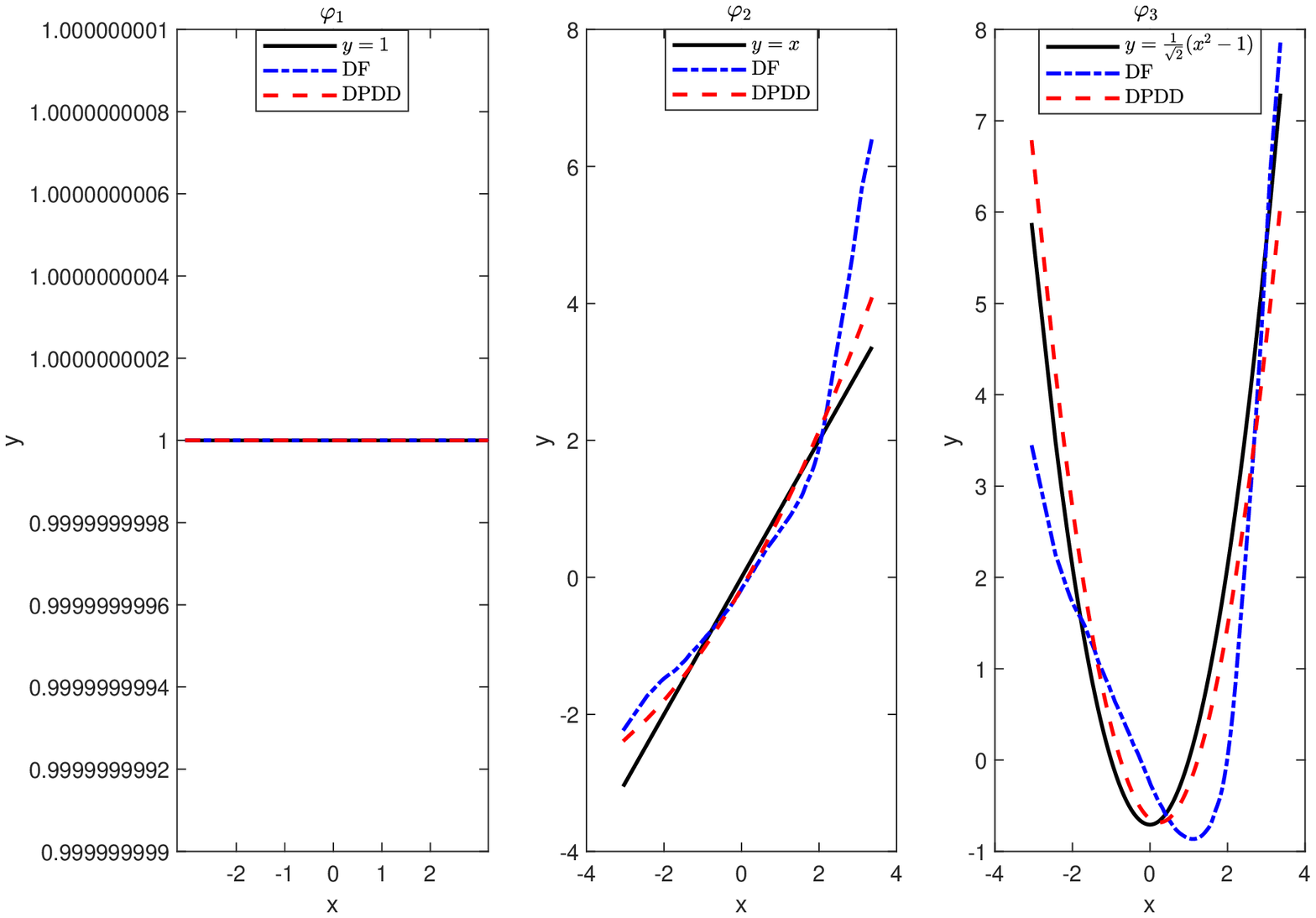}
		\caption{\textit{The first three eigenfunctions estimated by DPDD and DF methods, and the analytic eigenfunctions (the normalized Hermite polynomials) are given in black.}}\label{app_eig1dOU}
	\end{figure}
	\par 	In the following, we apply the two methods and show the eigenpairs, probability density functions and the first four moment functions to make a comparison. In Figure \ref{app_eig1dOU}, we plot the analytic first three eigenfunctions listed in (\ref{nor_Hermite}) as reference with the corresponding eigenvalues $\lambda_0=0,\,\lambda_1=-1,\,\lambda_2=-2$. The eigenfunctions approximated by two nonparametric methods are also illustrated in Figure \ref{app_eig1dOU}. We notice that the both methods can estimate the first constant eigenfunction exactly, however the DF method is less accurate for approximation of the second and third eigenfunctions than DPDD. Besides, DPDD also has a better approximation for the corresponding eigenvalues than DF as illustrated in  Table \ref{eigvalue_1dOU}.
	\begin{table}[h]
		\centering
		\caption{The first three eigenvalues estimated by DPDD and DF methods, with the analytic values as reference.}\label{eigvalue_1dOU}
		\begin{tabular}{|c|c|c|c|}
			\hline
			Eigenvalues & $\lambda_0$ & $\lambda_1 $ &  $\lambda_2$ \\
			\hline
			DF & $-1.1369\times 10^{-13}$ & -0.9163 & -1.4233\\
			\hline
			DPDD & $2.2204\times 10^{-14}$ & -1.0052 & -2.0974\\
			\hline
			Reference & 0 & -1 & -2 \\
			\hline
		\end{tabular}
	\end{table}
	\par To ensure the precision of  importance sampling Monte-Carlo integration method, the initial condition is taken to be a Gaussian distribution whose mean is randomly picked from the stationary distribution and variance is $0.5$. The behaviour of two approximate probability density functions at five different times is demonstrated in Figure \ref{pdf_1dOU}, and the analytic solutions are given in the third column as reference.
	\begin{figure}[htbp]
		\centering
		\includegraphics[width=5in]{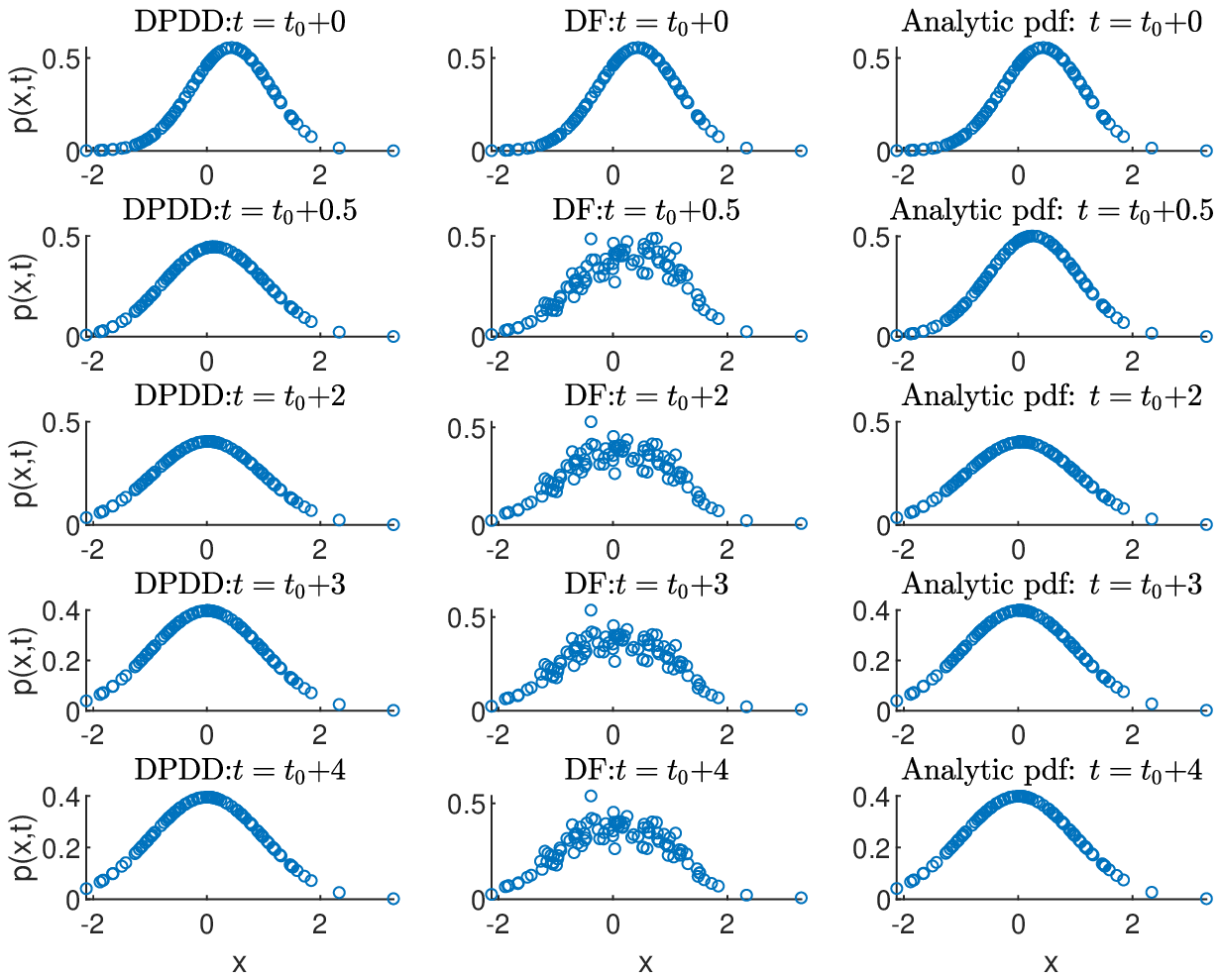}
		\caption{\textit{The approximate probability density function given by DPDD and DF methods at different times, and the third column is the analytic solution.}}\label{pdf_1dOU}
	\end{figure}
	Figure \ref{pdf_1dOU} shows  that, DPDD  almost obtains the exact solution, while the DF method leads  lots of deviations at many points. For the 1-dimensional OU process, the DPDD method gives a more accurate and robust approximation of the probability density function than the DF method.
	\begin{figure}[htbp]
		\centering
		\includegraphics[width=5in]{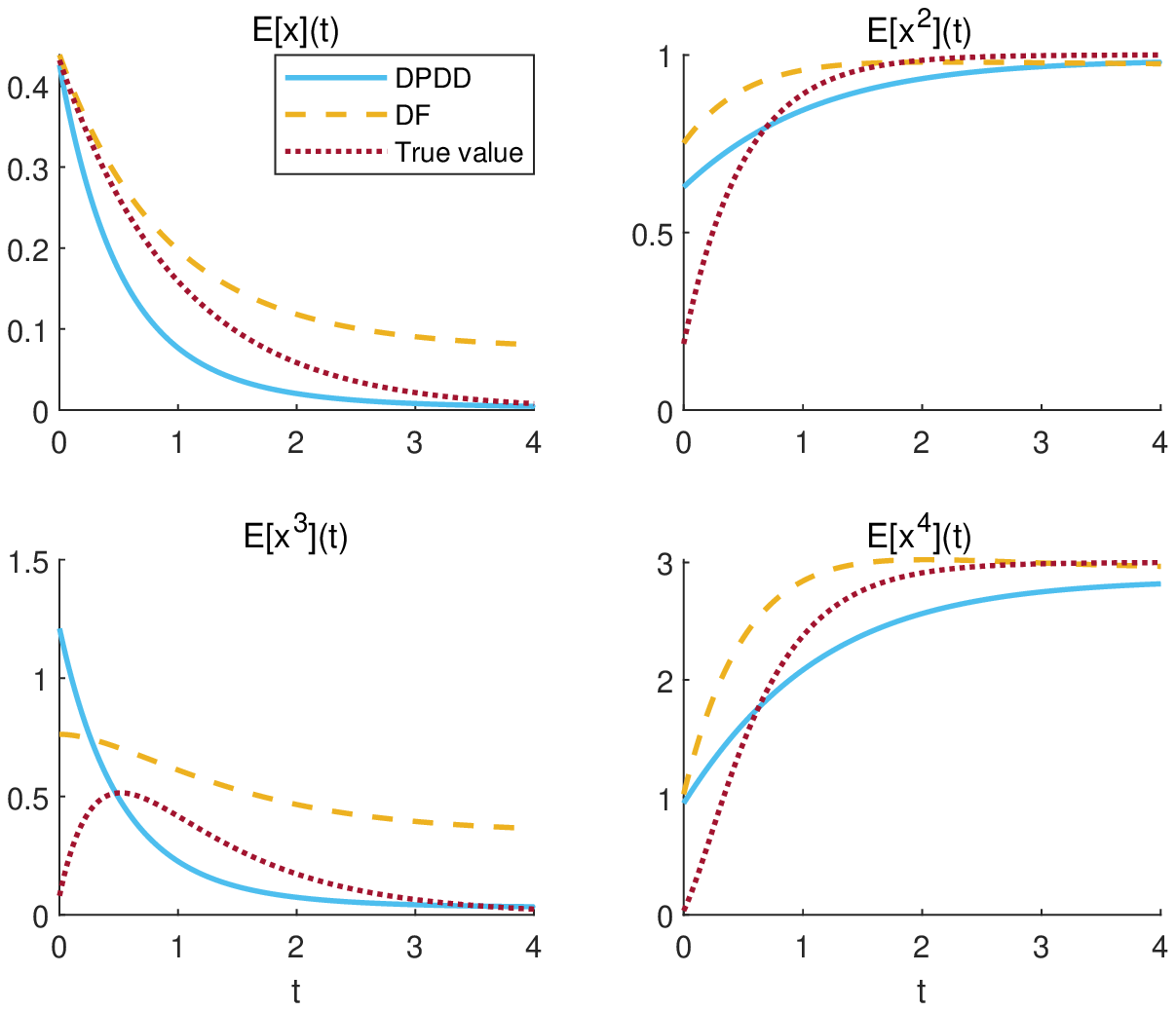}
		\caption{\textit{The evolution of the first four-order raw moments estimated by DPDD (blue line) and DF (yellow dashed line) methods, and the reference is ploted in red dotted line.}}\label{moments_1dOU}
	\end{figure}
	\par Finally, we plot the evolutions of the first four-order raw moments in Figure \ref{moments_1dOU} and the exact evolutions are directly computed by definition. From Figure \ref{moments_1dOU}, we observe that the both methods have a poor approximation at the beginning, since the initial condition is chosen very different from the true solution. Nevertheless, the approximations approach gradually to the truth evolution as the time goes. The moment functions given by DPDD are much closer to the analytic solutions and converges quickly as time passes, while the DF method even does not converge to the truth for the two odd order moments.  In summary,  DPDD can better approximate the probability density and the moments than DF does. Thus better prediction is achieved through DPDD. 
	
	In the previous two examples, the dynamical systems, with additive noise, are 1-dimensional gradient flows with some potentials. Then we can easily solve the analytic solution of stationary density by directly computing the time-homogeneous Smoluchowski equation \cite{GA1}. With the analytic density of stationary distribution, the spectral expansion (\ref{se1}) of the probability density in DPDD is more precise. However, the complex systems abound in the real world and there are often enormous challanges to simulate and characterize these systems. In what follows, we will consider the systems without knowing the truth  stationary density, and estimate the steady-state density by the variable bandwidth kernel method \cite{vbdm}. We will still compare our approach with diffusion forecast method in the following numerical results. Since the analytic probability density is difficult  to acquire, we will simulate plenty of trajectories of system states to get the sample data at any interested time, and use the corresponding point measure as reference, and this method is noted as an ensemble forecast \cite{book_diffusion1}. The region  where the sampling points are concentrated around corresponds to a large density, whereas the more sparse, the smaller the density.
	\subsection{A quadratic turbulence system}\label{2d_ex}
	In this subsection, we consider the following two-dimensional system of SDE  \cite{book_diffusion1},
	\begin{equation}
		\begin{aligned}
			\frac{du}{dt}&=\frac{1}{2}uv-d\Lambda_{11}u+(1-d\Lambda_{12})v+S_{11}\dot{W_1}+S_{12}\dot{W_2}\\
			\frac{dv}{dt}&=-\frac{1}{2}u^2+(-1-d\Lambda_{12})u-d\Lambda_{22}v+S_{12}\dot{W_1}+S_{22}\dot{W_2},
		\end{aligned}
	\end{equation}
	where the nonlinear terms conserve energy and $\dot{W_i}$ denotes the independent white noise. This model is a special case of the paradigm model with persistently unstable dynamics for turbulence introduced in \cite{turbulence}. Here, we set the parameters in the model $d=1/2,\, S=\Lambda^{1/2}$ with
	\[
	\Lambda=\begin{pmatrix}
		1 & 1/4\\
		1/4 & 1\\
	\end{pmatrix}.
	\]
	Euler-Maruyama scheme is utilized to simulate the solutions of  this model with time discretization $\tau=0.01$. Using $M=20000$ snapshots data $\{x_i=(u_i,v_i)\}_{i=1}^M$ and $4$-dimensional  observable $\bm{g}=\big(1,u,v,(u+v)^2\big)^T$, we approximate the stochastic Koopman operator, the Koopman eigenfunctions and eigenvalues by applying EDMD in the weighted space, and the  eigenfunctions are evaluated at sample points. Once we obtain the eigenfunctions, we use them as the basis functions to get the spectral expansion of the probability density funtion as shown in Section \ref{DPDD}. The number of truncation terms equals to the number of  first few eigenfunctions, which is exactly  the dimensions of observable $\bm{g}$. The probability density function can be used to forecast and make a strategic decision.
	\par As stated before, we compare our approach with the DF method, whose $1000$ basis functions are constructed by diffusion maps algorithm, and an ensemble forecast is adopted as reference. In Figure \ref{pdf_2d}, we plot the probability density function at five times  $t=0,\, 0.5,\,2,\,3,\,4$ obtained from different methods.
	\begin{figure}[htbp]
		\centering
		\includegraphics[width=5in]{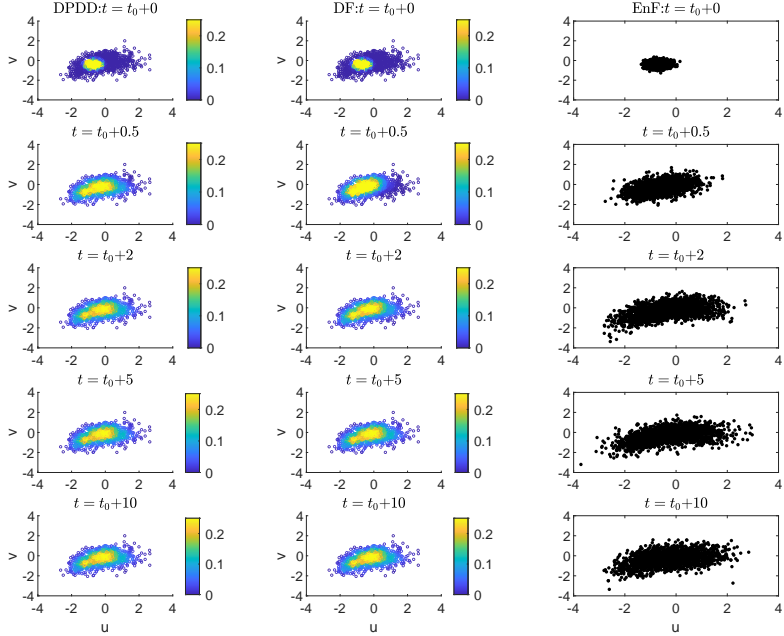}
		\caption{\textit{The approximate probability density function at different times. The left column is estimated by DPDD, the middle is estimated by DF, and the right column (ensemble forecast) is given as reference.}}\label{pdf_2d}
	\end{figure}
	In the first two colums of Figure \ref{pdf_2d}, the color represents the value of density, and the yellow corresponds to larger value, while the blue is associated with smaller value. The region  with dense points has large values of probability density in the last column. As we can see, the probability density fucntions approximated by two methods are very close to each other, and both match the distribution computed  by ensemble forecast. The evolution of the first four raw moments of dynamical state are depicted as following.
	\begin{figure}[htbp]
		\centering
		\includegraphics[width=5in]{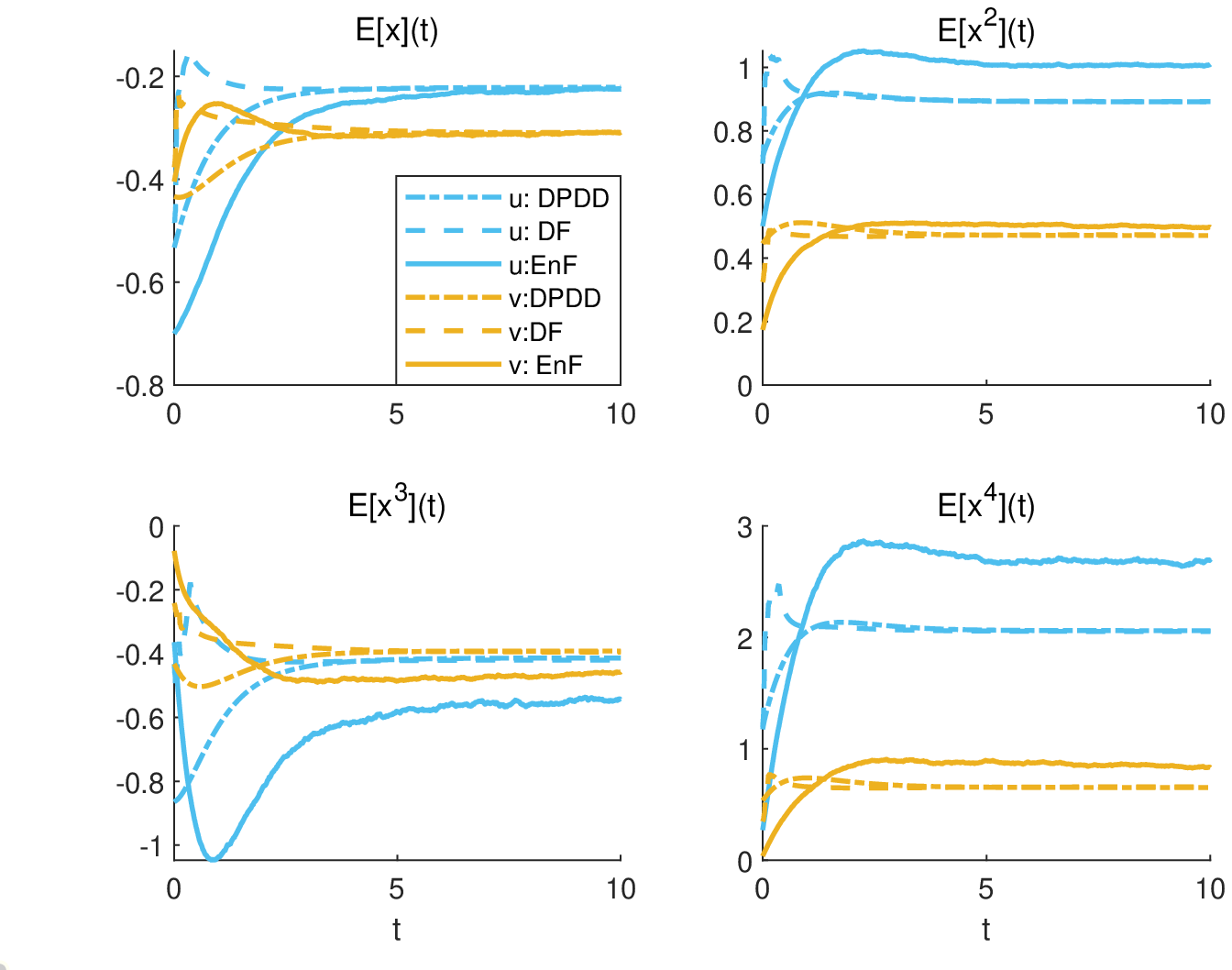}
		\caption{\textit{The evolution of 1st-4th order raw moments of this two-dimensional system. The two dimensions are  respectively depicted in ``blue" and ``yellow". DPDD method corresponds to the dash-dotted line, DF method  is corresponding to the dashed line, and the ensamble forecast, as reference, corresponds to the solid line.}}\label{moments_2d}
	\end{figure}

	From Figure \ref{moments_2d}, we find that as time goes, the both methods converge to almost the  same value  close to the reference. The obvious error exists bacause of the finite-dimensional truncation of the spectral expansion. For this example, we project the probability density function to a 4-dimensional space in DPDD approach, while DF method projects the density to a $1000$-dimensional space. The average time to compute $1000$ DF basis functions  is about $1.1\times 10^3$ seconds, and the one to compute $4$ DPDD basis functions  is   $3.8\times 10^{-3}$ seconds. An clear  comparison is listed in Table \ref{time_2d}, which shows that the DPDD  method is much more computationally efficient  than the DF method when they achieve a similar precision of approximation.
	\begin{table}[h]
		\centering
		\caption{The CPU time comparison of computing the corresponding eigenpairs}\label{time_2d}
		\begin{tabular}{cccc}
			\hline
			Method & DPDD & DF \\
			\hline
			The number of bases & 4 & 1000 \\
			CPU time (s) & $3.8\times 10^{-3}$ & $1.1\times 10^{3}$ \\
			\hline
		\end{tabular}
	\end{table}

	\subsection{Noisy Lorenz-63 model}\label{Lorenz63} In this subsection, we consider a more complex system - noisy Lorenz-63 model, which is intrinsically chaotic and stochastic. Noisy Lorenz-63 system characterizes the atmospheric convection model, and the governing equation  is given by,
	\begin{equation*}
		\left\{
		\begin{aligned}
			\frac{dx}{dt}&=\sigma(y-x)+q_x\eta_x\\
			\frac{dy}{dt}&=x(\rho-z)-y+q_y\eta_y\\
			\frac{dz}{dt}&=xy-\beta z+q_z\eta_z,\\
		\end{aligned}
		\right.
	\end{equation*}
	where the unknown functions $x$, $y$ and $z$ represent the velocity, the horizontal temperature variation, and the vertical temperature variation, respectively. In the paper, we consider the system with standard Lorenz parameters  Prandtl number $\sigma=10$, the (relative) Rayleigh number $\beta=8/3$, and the geometric factor $\rho=28$. Besides, the stochastic force is assumed to be the independent Gaussian white noise with same intensity, i.e., $\eta_xdt,\eta_ydt,\eta_zdt \sim \mathcal{N}(0,dt)$ and $q_x=q_y=q_z=0.1$.  We simulate the trajectories in time interval $[0,100]$ by Runge-Kutta scheme with temporal step-size $\Delta t=0.01$. We model the density by  the DPDD method (with 2 basis functions) and the diffusion forecast (with 1000 basis functions), and the ensemble forecast solution  is used for reference. A 3-dimensional scatter plot, where color of the circle corresponds to the value of density, is drawn in Figure \ref{pdf_lorenz63}. By Figure \ref{pdf_lorenz63}, we notice that the larger value of density is around the attractor.  
	For a better visualization,  we project the 3D scatter plot onto $xOz$ plane. By Figure \ref{pdf_lorenz63} and Figure \ref{pdf_projectlorenz63}, we find that the solution profile of DPDD is different from the solution profile of DF. This may be because of the strong  instability of the chaotic system. The solution pattern  by DPDD and DF looks similar to the ensemble forecast solution. For the simulation, DPDD uses 2 basis functions and DF uses 1000 basis functions. Thus, DPDD is more efficient than DF for any real-time computation.

	\begin{figure}[htbp]
		\centering
		\includegraphics[width=5.5in]{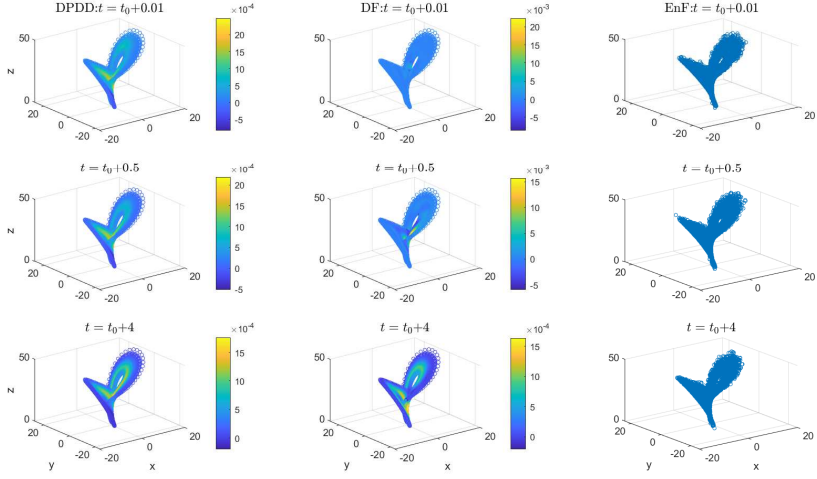}
		\caption{\textit{The approximate probability density function obtained by different methods at different moments. Left(DPDD), middle(DF), right(EnF). }}\label{pdf_lorenz63}
	\end{figure}

	\begin{figure}[htbp]
		\centering
		\includegraphics[width=5.5in]{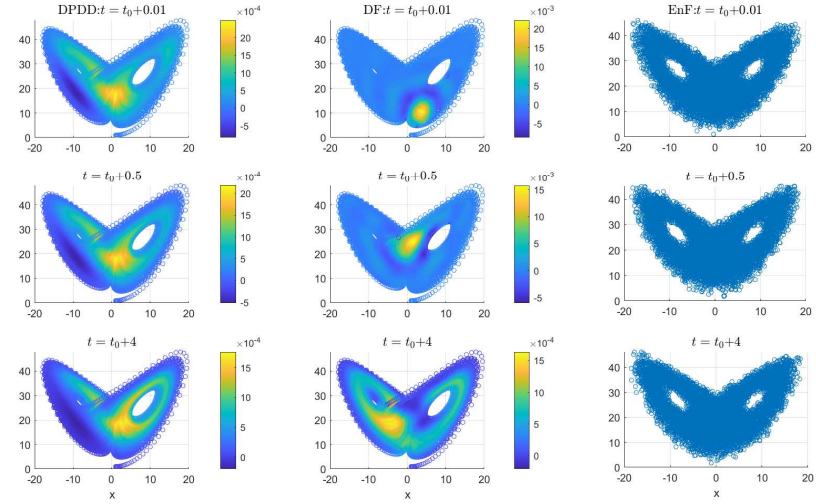}
		\caption{\textit{The projected probability density function obtained by different methods at different moments. Left(DPDD), middle(DF), right(EnF). }}\label{pdf_projectlorenz63}
	\end{figure}
	
	\subsection{An ocean model}\label{realistic} In this subsection, we consider a more practical problem,  the sea surface temperature (SST),  which draws a vast interest in climate field. We have  a global monthly SST dataset, the Extended Reconstructed Sea Surface Temperature (ERSST V5) \cite{sst_data} dataset, which includes the average monthly temperature from January 1854 to June 2022. The data set spatially covers the global (latitude: 88.0N - 88.0S, longitude: 0.0E - 358.0E), and is divided into an 89x180 grid. We take the 600 temporal data from January 1971 to December 2020 as observed data to estimate the eigenpairs, and the data from January 2020 to June 2022 as test data.  Moreover,  we will forecast the temperature for the second half year of 2022. The probability density functions for all the 16020 grid points are computed, which means that the total amount of data is  $9.612\times 10^6$.
	\begin{figure}[htbp]
		\begin{minipage}[t]{0.5\linewidth}
			\centering
			\includegraphics[width=2.8in, height=2.3in]{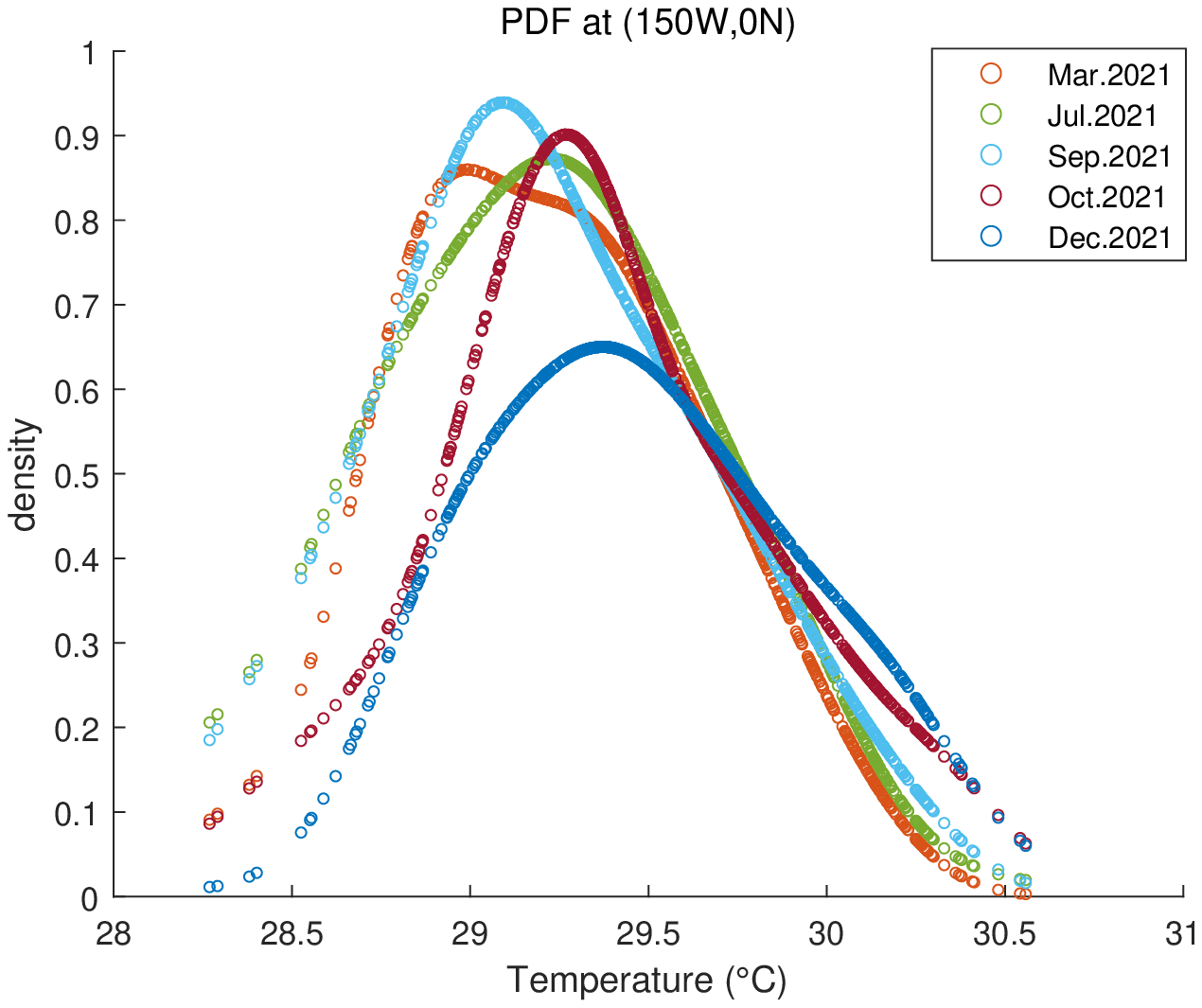}
			\caption{PDF of SST for five different months at 150 west longitude, 0 nouth latitude}\label{pdf_sst1}
		\end{minipage}
		\begin{minipage}[t]{0.5\linewidth}
			\centering
			\includegraphics[width=2.8in, height=2.3in]{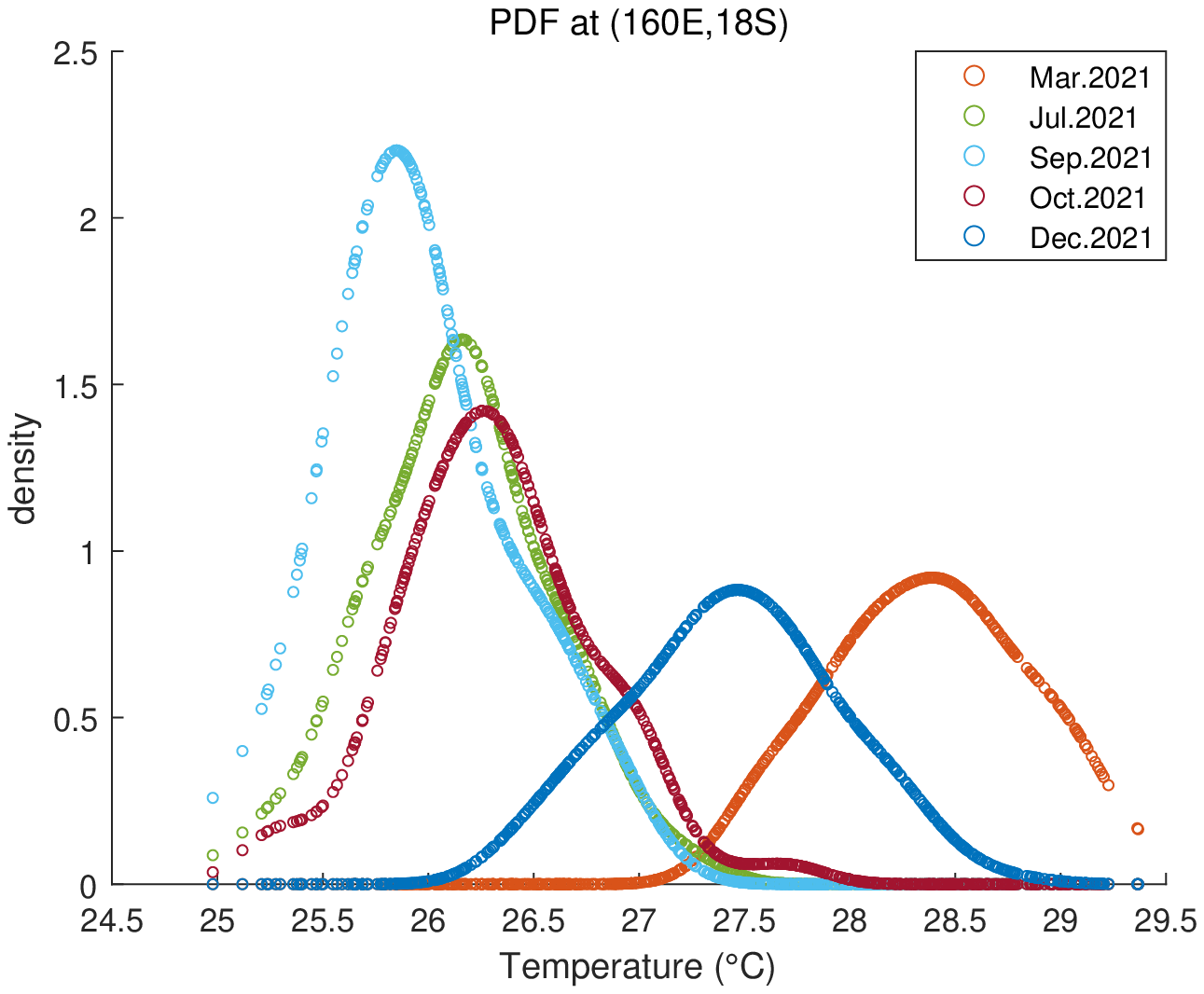}
			\caption{PDF of SST for five different months at 160 east longitude, 18 south latitude}\label{pdf_sst2}
		\end{minipage}
	\end{figure}
	
	The temperature figures at different months are presented to demonstrate the application  of DPDD method solving  the realistic problem. Figure \ref{pdf_sst1} and Figure \ref{pdf_sst2} show the approximate probability density functions of SST for different months in 2021 at two fixed locations, respectively. Comparing Figure \ref{pdf_sst1} and Figure \ref{pdf_sst2}, we find that the density function of average monthly temperature on the equator barely changes throughout the year, while the one in the southern hemisphere varies greatly. On the equator, higher temperatures are maintained all year with mean 29.2 centigrade degrees, and the maximum SST can be over 30 centigrade degrees. Whereas, in the southern hemisphere, temperatures are the highest for March (in summer) and lowest for September (in winter), which is the exact opposite of what happens in the northern hemisphere. The lowest SST can be 25 centigrade degrees in the southern hemisphere and the maximum rarely reaches 29.5 centigrade degrees, with a large temperature difference throughout the year. This indicates that the sea surface temperatures in southern and northern hemispheres are quite different from the equator. Then we approximate global SST for February 2020 and June 2021 comparing to true data, and also make a prediction for December 2022.
	\begin{figure}[htbp]
		\begin{minipage}[t]{0.5\linewidth}
			\centering
			\includegraphics[width=2.8in, height=2.3in]{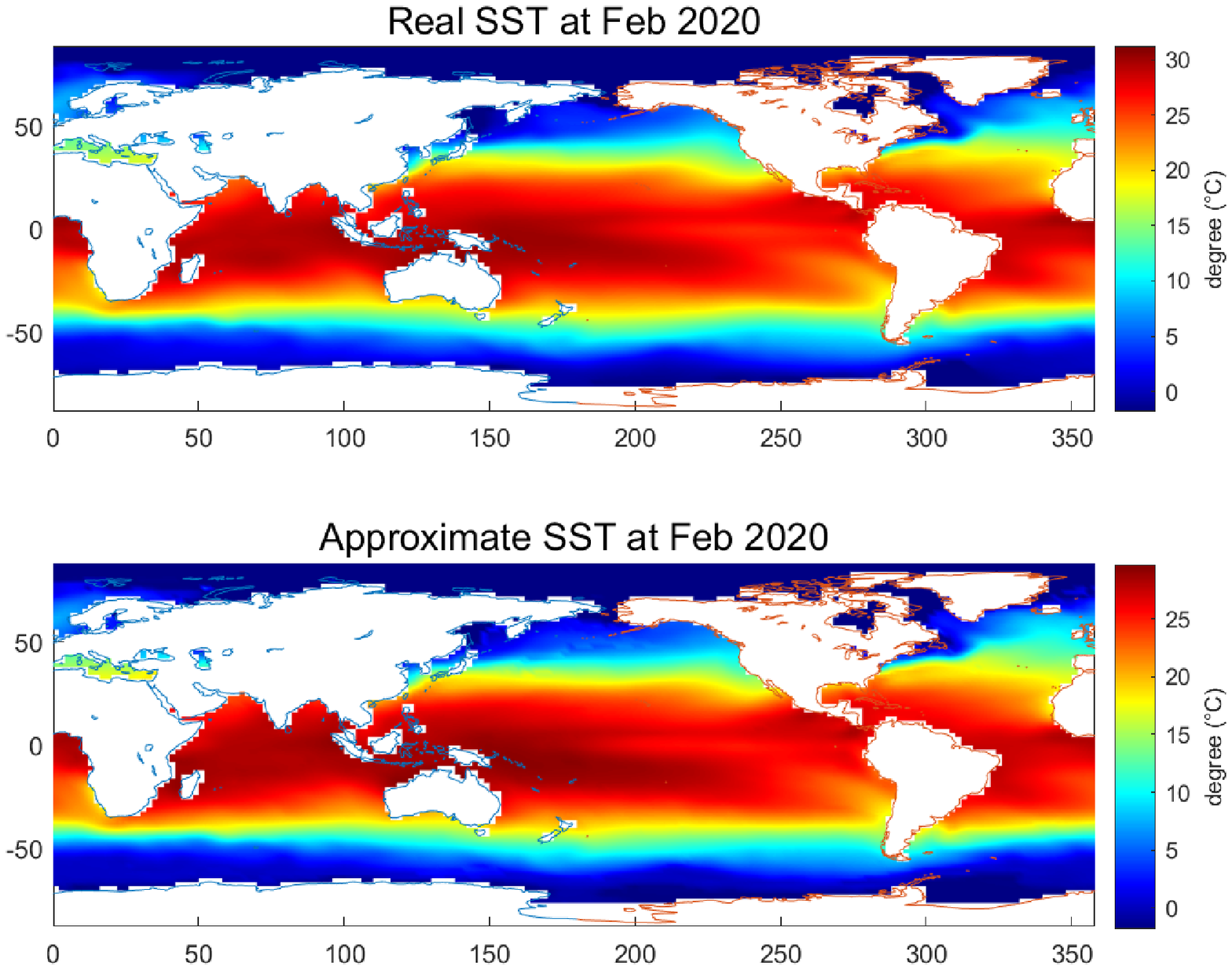}
			\caption{Global SST for February 2020, real data (top), approximate value (bottom).}\label{SST1}
		\end{minipage}
		\begin{minipage}[t]{0.5\linewidth}
			\centering
			\includegraphics[width=2.8in, height=2.3in]{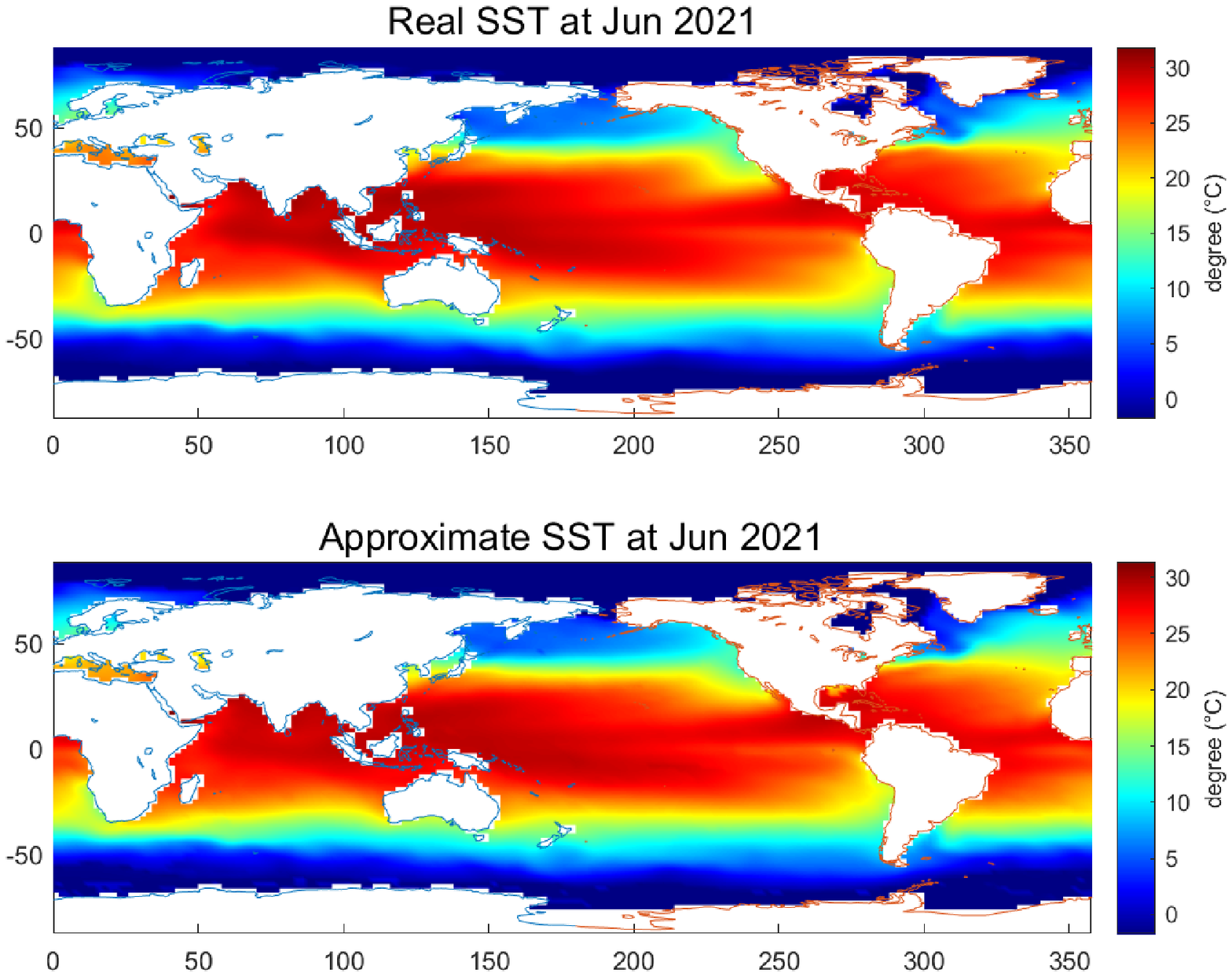}
			\caption{Global SST for September 2021, real data (top), approximate value (bottom).}\label{SST2}
		\end{minipage}
	\end{figure}
	\begin{figure}[htbp]
		\centering
		\includegraphics[width=5.5in, height=2.5in]{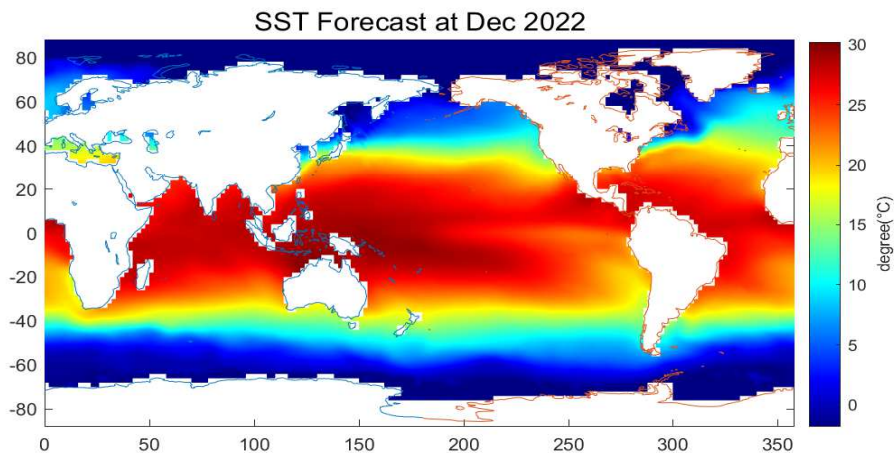}
		\caption{Global SST forecast for December 2022}\label{SST_pre}
	\end{figure}
	\par	As depicted in Figure \ref{SST1} and Figure \ref{SST2}, the approximate global SST is very close to the reference value which is the real data from the dataset (ERSST v5). It is worth mentioning that the climate of the Mediterranean Sea in February is warm while it is torrid in June. In contrast, the sea surface temperature of the South Atlantic Ocean is higher in February than in June.  And the prediction for December 2022 is illustrated in Figure \ref{SST_pre}. These figures show  that the annual variation of SST in near-land areas is significantly larger than that in areas far from land, and the temperatures in southern and northern hemispheres change oppositely. DPDD sorts out this practical problem of having prodigious amounts of data, estimates the probability density functions well and predicts the state well, thus  helps to make a scientific   decision.

	\section{Conclusion}\label{con}
	In this work, we have presented a nonparametric method, dynamic probability density decomposition (DPDD),  to approximate the probability density functions for stochastic dynamical systems utilizing the data-driven mechanism.  The snapshots of time-series data were sampled from the  stationary distribution  and can be  one long-time trajectory or multiple trajectories.  This approach was based on a stochastic Koopman formalism, operates on the snapshots, and estimated the probability density functions represented by  the linear expansion of orthonormal basis functions in a weighted $L^2$ space. This method has enabled us to represent the density functions as a finite-term truncation, and predicted the evolution of probability density for arbitrary time, and also the expectation values of observables. As a byproduct of DPDD, one can decompose arbitrary functions of dynamical state using the eigenfunctions of the infinitesimal generator of SDE, and the decomposition is similar to DPDD. Thus we can construct a dimensional reduction on data manifold. Comparing with diffusion forecast,  DPDD behaviors  better  in applicability,  model accuracy and computation efficiency. Moreover, we have given a rigourous  convergence analysis of this method. Finally, DPDD was used to make probability density forecast for  one-dimensional OU process, quadratic turbulence system, noisy-Lorenz-63, and a practical  problem of global sea surface temperature.

\end{document}